\numberwithin{equation}{section}
\theoremstyle{plain}
 \newtheorem{Theorem}{Theorem}[section]
 \newtheorem{Proposition}[Theorem]{Proposition}
 \newtheorem{Lemma}[Theorem]{Lemma}
 \newtheorem{Corollary}[Theorem]{Corollary}
 \theoremstyle{definition}
 \newtheorem{Definition}[Theorem]{Definition}
  \theoremstyle{remark}
 \newtheorem{Remark}{Remark}
 \newtheorem{Example}{Example}
 \newtheorem{Claim}{Claim}
\title{ Computation of the index of some meromorphic functions of degree $3$ on tori}
\author{ SARENHU}
\begin{document}
\maketitle

\noindent
\begin{abstract}
The index of a meromorphic function $g$  on a compact Riemann surface is an invariant of $g$, which is defined as the number of negative eigenvalues of the differential operator  $L:=-\Delta-|dG|^2$, where $\Delta$ is the Laplacian with respect to a conformal metric $ds^2$ 
on the Riemann surface, 
$G\colon M\to S^2$ is the holomorphic map corresponding to $g$. 
We consider the meromorphic function $w$ on the Riemann surface 
$M_a=\left\{(z,w)\in\widehat{\mathbb{C}}^2 \mid w^2=z(z-a)\left(z+\frac{1}{a}\right)\right\}(a\geqslant 1 )$
 homeomorphic to a torus, and we determine the index of $tw$ for all $a$ in the range $1\leqslant a\leqslant a_0$ (where $a_0$ can be numerically evaluated) and all $t>0$. 
\end{abstract}

\noindent
\section{\bf Introduction}

The index of a nonconstant meromorphic function $g$ on a compact Riemann surface is an invariant of $g$, which is defined as the number of negative eigenvalues of the differential operator $L:=-\Delta-|dG|^2$, where $\Delta$ is the Laplacian with respect to a conformal metric $ds^2 =\lambda d\zeta d\bar{\zeta}$ on the Riemann surface, defined by $\Delta: =\frac{4}{\lambda}\frac{\partial^2}{\partial \zeta\partial\bar{\zeta}}$ using a local coordinate $\zeta$, $G\colon M\to S^2$ is the holomorphic map corresponding to the meromorphic function $g$ and $|dG|$ is the norm of the differential $dG$ of $G$. The multiplicity of the eigenvalue $0$ of $L$ is called the nullity of $g$ and denoted by $\mathrm{Nul}(g)$. The operator $L$ depends on how to choose a conformal metric, but the index and the nullity do not depend on how to choose a conformal metric. 

The index of a meromorphic function is closely related to the index (Morse index) of a complete minimal surface with finite total curvature. Huber$\cite{Huber}$ and Osserman$\cite{Osserman}$ proved if the total curvature of a complete oriented minimal surface in $\mathbb{R}^3$ is finite, this minimal surface is identified with a Riemann surface given by excluding finite points from a compact Riemann surface, and the Gauss map on this minimal surface is extended to a meromorphic function on the compact Riemann surface. Fischer-Colbrie$\cite{ Fischer-Colbrie}$ and Gulliver-Lawson $\cite{Gulliver},\cite{Gulliver-Lawson}$ proved that for a complete oriented minimal surface  in $\mathbb{R}^3$, 
the index  is finite if and only if the total curvature is finite. This is a qualitative study of the  index. Fischer-Colbrie proved when the total curvature a complete oriented minimal surface in $\mathbb{R}^3$ is finite, the index coincides with the index of the extended Gauss map of this minimal surface. Tysk $\cite{Tysk}$ proved the index of a complete oriented minimal surface in $\mathbb{R}^3$ is bounded from above by some scalar multiple of the total curvature. This is the first quantitative study of the relationship between the index and the total curvature. Study of lower bound of index was done by Choe $\cite{Choe}$ and Nayatani $\cite{Nayatani-}$. Nayatani $\cite{Nayatani}$ studied for the index and the nullity of the operator $L_g$ associated to any meromorphic function $g$ on a compact Riemann surface $M$, how they change under a certain deformation $g_t$ of $g$ ($t$ is a positive real number). He considered the derivative $\wp^\prime$ of the Weierstrass $\wp$-function corresponding to the square lattice $\mathbb{Z}\oplus i\mathbb{Z}$ as a meromorphic function $g$,  and computed the index of $g_t$ when $t$ is sufficiently small and the nullity of $g_t$ for all $t$. In particular, he showed that there are two values $t_1,t_2(t_1<t_2)$ of $t$ such that the nullity is $4$. Furthermore, he investigated the change of index when $t$ becomes large. He showed that the indices of $t_1g, t_2g$ are $5$, and since $t_2g$ is the Gauss map of the Costa surface, he could conclude that the index of the Costa surface is $5$. 

In this paper, we study the index $\mathrm{Ind}(g)$ and the nullity $\mathrm{Nul}(g)$ of 
certain nonconstant meromorphic functions $g$ from a compact Riemann surface $M$ to $\widehat{\mathbb{C}}=\mathbb{C}\cup\{\infty\}$. 
In order to compute $\mathrm{Nul}(g)$, we recall the real vector space $H(g)$(see \eqref{H(g)}) which was introduced by Ejiri-Kotani $\cite{Ejiri-Kotani}$ and Montiel-Ros $\cite{Montiel-Ros}$. 
By the formula $\mathrm{Nul}(g)=3+\dim_\mathbb{R}H(g)$, we can compute $\mathrm{Nul}(g)$. If the genus of the Riemann surface $M$ is $1$, that is, $M$ is homeomorphic to the torus, $H(g)$ can be defined as follows. 
\begin{align*}
H(g)=&\Big\{f\omega\mid f:M\to \widehat{\mathbb{C}}\,\,\mbox{is a meromorphic function}, \\ 
 &\hspace{5mm}D(f)+\widetilde{B}(g)\geqslant 0, \mathrm{Res}_{p_i}\left(f\omega\right)=0, i=1,\dotsb ,\mu,\notag\\  
 &\hspace{5mm}\mathrm{Re}\int_{\alpha}(1-g^2, i(1+g^2), 2g)(f\omega)=0, \forall\alpha \,\,\mbox{closed curve}\Big\},  
\end{align*}
where $\omega$ is a fixed nonzero holomorphic one-form on $M$, $p_1, \dotsb, p_\mu$ are the ramification points of $g$ with ramification indices $e_1,\dotsb,e_\mu$, and $D(f)$ is the divisor of $f$, and when $P(g)$ is the polar divisor of $g$, $\widetilde{B}(g)$ is the divisor defined by $\widetilde{B}(g)=\sum_{i=1}^{\mu}e_ip_i-2P(g)$. 
Since the definition of $H(g)$  is complicated as it includes the period condition, we introduce the complex vector space 
\begin{align*}
\widehat{H}(g)=&\Big\{f\omega \mid f:M\to \widehat{\mathbb{C}}\,\,\mbox{is a meromorphic function},\notag\\ 
 &\hspace{20mm}D(f)+\widetilde{B}(g)\geqslant 0, \mathrm{Res}_{p_i}\left(f\omega\right)=0, i=1,\dotsb , \mu  \Big\}  
\end{align*}
that is easier to handle, excluding the period condition. $H(g)$ is a real subspace of $\widehat{H}(g)$. For $t\in\mathbb{C}\setminus\{0\}$, $H(tg)\neq H(g)$ in general, but $\widehat{H}(tg)=\widehat{H}(g)$.

As already mentioned, Nayatani$\cite{Nayatani}$ computed the index of Costa surface.  The compact Riemann surface of the Costa surface is $\mathbb{C}$ divided by  the square lattice $\mathbb{Z}\oplus i\mathbb{Z}$, which is homeomorphic to a torus, and the Gauss map of Costa surface is a scalar
multiple of the derivative $\wp^\prime$ of the Weierstrass $\wp$-function. $\mathbb{C}/{\mathbb{Z}\oplus i\mathbb{Z}}$ is isomorphic to 
\begin{align*}
M_1=\left\{(z,w)\in\widehat{\mathbb{C}}^2 \mid w^2=z(z^2-1)\right\}
\end{align*}
as a Riemann surface, and $\wp^\prime$ coincides with the meromorphic function $w$ except for a scalar multiple. As a generalization of Nayatani's setting, we consider a one-paramenter family 
\begin{align*}
M_a=\left\{(z,w)\in\widehat{\mathbb{C}}^2 \mid w^2=z(z-a)\left(z+\frac{1}{a}\right)\right\}, \,a\geqslant 1 
\end{align*}
of Riemann surfaces homeomorphic to a torus and the meromorphic function $w$, and we tackled the problem of computing the index and nullity of $tw$, $t>0$. We note that $M_a$ is isomorphic, as a Riemann surface, to $\mathbb{C}$ divided by the rectangular lattice $\mathbb{Z}\oplus ic\mathbb{Z},\,c>0$. As a result, we are able to determine $\mathrm{Ind}(tw)$ for all $a$ in the range $1\leqslant a\leqslant a_0$ (where $a_0$ can be numerically evaluated) and all $t>0$. This is the main theorem of this paper. 

 First we determine $\widehat{H}(w)$. Then we determine $t>0$ so that the dimension of $H(tw)$
is $1$ or more, and find exactly two values $t=t_1(a), t_2(a)\,\, (t_1(a) < t_2(a))$ for each $a$ in the range  $1\leqslant a\leqslant a_0$ (where $a_0$ can be numerically evaluated). Therefore, we can determine $\mathrm{Nul}(tw)$. 
By using the fact that  $\mathrm{Nul}(tw)$ also changes if $\mathrm{Ind}(tw)$ changes when $a, t$ move, we determine $\mathrm{Ind}(tw)$ for all $a$ in the range $1\leqslant a\leqslant a_0$ (where $a_0$ can be numerically evaluated) and all $t>0$.  
\noindent
\begin{Theorem}\rm (Theorem \ref{theorem-d})
If $t_1(a)$ and $t_2(a)$ are as described above,  then
\begin{align*}
\mathrm{Ind}(tw)=
\begin{cases}
5, & 0 < t\leqslant t_1(a),t_2(a)\leqslant t, \\
6, &  t_1(a)< t< t_2(a) 
\end{cases}
\end{align*}
 for any $a$ in the range $1\leqslant a \leqslant a_0$ ( where $a_0$ can be numerically evaluated ). 
\end{Theorem}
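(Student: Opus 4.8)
The plan is to view $\mathrm{Ind}(tw)$ as a function of the two parameters $(a,t)$ and to exploit that it can change only across the locus where the nullity jumps, which by the preceding analysis is exactly the union of the two curves $t=t_1(a)$ and $t=t_2(a)$; the three resulting constant values are then pinned down from the single slice $a=1$, where $M_1$ is the Costa torus and the answer is due to Nayatani.

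First I would record the semicontinuity behaviour of the spectrum. Fixing a conformal metric on $M_a$ independent of $t$, one has $L_{tw}=-\Delta-|dG_{tw}|^2$ with only the potential depending on $t$, so the eigenvalues vary continuously in $(a,t)$; moreover $0$ always lies in $\ker L_{tw}$ with multiplicity at least $3$ (the three coordinate functions of the associated conformal map into $S^2$). Hence at any $(a_\ast,t_\ast)$ with $\mathrm{Nul}(t_\ast w)=3$ the zero eigenspace is exactly this persistent $3$-dimensional block, every other eigenvalue is bounded away from $0$, and $\mathrm{Ind}(tw)$ is constant near $(a_\ast,t_\ast)$. Since $\mathrm{Nul}(tw)=3+\dim_{\mathbb R}H(tw)$ and, for $1\leqslant a\leqslant a_0$, $\dim_{\mathbb R}H(tw)\geqslant 1$ occurs only when $t=t_1(a)$ or $t=t_2(a)$ (where it equals $1$), it follows that $\mathrm{Ind}(tw)$ is constant on each connected component of $\{1\leqslant a\leqslant a_0\}\times(0,\infty)$ with the two graphs removed.

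Next I would check that, for $a$ in the range, $t_1$ and $t_2$ are continuous with $t_1(a)<t_2(a)$ strictly, so that the two graphs are disjoint and separate the parameter region into three connected pieces $R_0=\{0<t<t_1(a)\}$, $R_1=\{t_1(a)<t<t_2(a)\}$, $R_2=\{t>t_2(a)\}$, and $\mathrm{Ind}(tw)$ is a single constant on each $R_j$. To evaluate these constants, note that at $a=1$ the function $w$ is a scalar multiple of $\wp'$ for the square lattice, and Nayatani$\cite{Nayatani}$ (together, if needed, with the limiting behaviour as $t\to 0$ and $t\to\infty$) gives $\mathrm{Ind}(tw)=5$ for $t$ small and for $t$ large, $\mathrm{Ind}(tw)=6$ for $t_1(1)<t<t_2(1)$, and $\mathrm{Ind}(t_i(1)w)=5$. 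As $(1,t)$ lies in $R_0,R_1,R_2$ for $t$ small, for $t\in(t_1(1),t_2(1))$, and for $t$ large respectively, the constants on $R_0,R_1,R_2$ are $5,6,5$. Finally the boundary values $\mathrm{Ind}(t_i(a)w)=5$ follow because the index is lower semicontinuous while index-plus-nullity is upper semicontinuous: the values $5$ (on $R_0$) and $6$ (on $R_1$) on the two sides of $t_1(a)$, together with $\mathrm{Nul}(t_1(a)w)=4$, force $\mathrm{Ind}(t_1(a)w)=5$, and likewise at $t_2(a)$.

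The hard part is the middle step: the curves $t=t_i(a)$ are defined implicitly by the vanishing of the real period map on the candidate elements of $\widehat H(w)$, and those periods are complete elliptic integrals in $a$ with no closed form, so proving $t_1(a)<t_2(a)$, the disjointness of the two graphs, and the constancy of $\dim_{\mathbb C}\widehat H(w)$ over the whole interval requires quantitative estimation of these integrals — which is exactly what forces a numerically evaluated cutoff $a_0$ in place of ``all $a\geqslant 1$''. A secondary obstacle, needed to be certain the index genuinely jumps across each curve rather than the relevant eigenvalue merely touching $0$, is the transversality of the crossings; if it is not already contained in Nayatani's Costa computation one obtains it by first-order perturbation theory, computing $\frac{d}{dt}\lambda(t)$ at $t_i(a)$ from the explicit zero eigenfunction attached to the generator of $H(t_i(a)w)$ and the derivative $\frac{d}{dt}|dG_{tw}|^2$ of the potential.
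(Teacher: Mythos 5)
Your proposal is correct and is essentially the paper's own argument: continuity of the eigenvalues in the parameters $(a,t)$, the observation that $\mathrm{Ind}(tw)$ can change only where $\mathrm{Nul}(tw)$ jumps (which by Lemma \ref{dimH} happens only on the curves $t=t_1(a)$, $t=t_2(a)$ for $1\leqslant a\leqslant a_0$), and anchoring the constant values $5,6,5$ on the three regions at the slice $a=1$ via Nayatani's Costa computation. The only minor variation is how the values on the curves themselves are fixed: you use two-sided semicontinuity of the index together with $\mathrm{Nul}=4$, while the paper deforms along each curve from $a=1$ using that the nullity stays equal to $4$ there; both work, and your remark about needing transversality of the eigenvalue crossing is actually superfluous in this scheme, since the region values are already determined from the $a=1$ slice.
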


This paper is organized as follows. In Section 2, we define the index and nullity of a meromorphic function on a compact Riemann surface. We also recall the vector spaces which are used in the computation of nullity and were introduced by Ejiri-Kotani \cite{Ejiri-Kotani} and Montiel-Ros \cite{Montiel-Ros}. In Section 3 we consider a certain family of meromorphic functions $g_a$ of degree three defined on Riemann surfaces $M_a$, $a\geqslant1$, homeomorphic to the torus, and describe the above vector spaces in these special cases. 
In Section 4 we compute the nullity of $tg_a$ for all $t>0$ and $a$ in the range $1\leqslant a\leqslant a_0$, where $a_0$ is a constant which can be numerically evaluated.  
In Section 5 we compute the index of $tg_a$ for all $t$ and $a$ in the same range. 

\section*{Acknowledgments}
First, I want to thank Prof. Shin Nayatani for his academic guidance and personal supports, helping me to tackle this research. Secondly, I want to thank Prof. Ryoichi Kobayashi for many comments and advices. Thirdly, I want to thank  Dr. Takumi Gomyou for his help in completing this paper. Finally, I would like to thank my family for encouragement. Without their support this paper would not have been possible.

\noindent
\section{\bf Preliminaries}

In this section, we define the index and nullity of a meromorphic function on a compact Riemann surface. We also recall the vector spaces which are used in the computation of nullity. 

Let $M$ be a Riemann surface, and $g$ be a nonconstant meromorphic function from a compact Riemann surface $M$ to $\widehat{\mathbb{C}}$. We fix a conformal metric $ds^2 = \lambda dz d\overline{z}$, where $\lambda$ is a positive function on $M$, and consider the differential operator $L=-\Delta-|dG|^2$. Here, $\Delta: =\frac{1}{\lambda}\left(\frac{\partial^2}{\partial x^2}+\frac{\partial^2}{\partial y^2}\right)=\frac{4}{\lambda}\frac{\partial^2}{\partial z\partial \overline{z}}$ is the Laplace-Beltrami operator of $ds^2$. 
\begin{align*}
G=\left(\frac{2\mathrm{Re}(g)}{|g|^2+1},\frac{2\mathrm{Im}(g)}{|g|^2+1}, \frac{|g|^2-1}{|g|^2+1}\right): M \to S^2
\end{align*}
is the holomorphic map corresponding to the meromorphic function $g$, where $S^2$ is the unit sphere of $\mathbb{R}^3$. $\left|dG\right|^2$ is the square of the norm of $dG$ with respect to the metric $ds^2$, that is, $\left|dG\right|^2=\frac{1}{\lambda}\left(\left|\frac{\partial G}{\partial x}\right|^2+\left|\frac{\partial G}{\partial y}\right|^2\right)=\frac{4}{\lambda}\left|\frac{\partial G}{\partial z}\right|^2. $   

\noindent
\begin{Claim}\label{nul}
$N(g)=\left\{u\in C^\infty (M)\mid Lu=0\right\}$ does not depend on how the conformal metric $ds^2$ is taken. 
\end{Claim}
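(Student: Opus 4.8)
The plan is to examine how the operator $L = -\Delta - |dG|^2$ transforms when we replace the conformal metric $ds^2 = \lambda\,dz\,d\overline{z}$ by another conformal metric $\widetilde{ds}^2 = \widetilde{\lambda}\,dz\,d\overline{z} = \rho\,ds^2$, where $\rho = \widetilde{\lambda}/\lambda$ is a positive smooth function on $M$. Since both metrics are conformal, they induce the same conformal structure, and hence $G$ is the same holomorphic map in both cases. The key point is that the two ingredients of $L$ scale in exactly the same way: $\widetilde{\Delta} = \frac{4}{\widetilde{\lambda}}\frac{\partial^2}{\partial z\,\partial\overline{z}} = \frac{1}{\rho}\,\Delta$, and similarly $|dG|^2_{\widetilde{ds}^2} = \frac{4}{\widetilde{\lambda}}\left|\frac{\partial G}{\partial z}\right|^2 = \frac{1}{\rho}\,|dG|^2_{ds^2}$. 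Therefore $\widetilde{L} = -\widetilde{\Delta} - |dG|^2_{\widetilde{ds}^2} = \frac{1}{\rho}\left(-\Delta - |dG|^2_{ds^2}\right) = \frac{1}{\rho}\,L$.

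From this pointwise identity the claim is immediate: for any $u \in C^\infty(M)$ we have $\widetilde{L}u = \frac{1}{\rho}\,Lu$, and since $\rho > 0$ everywhere, $\widetilde{L}u = 0$ if and only if $Lu = 0$. Hence $\widetilde{N}(g) = \{u \in C^\infty(M) \mid \widetilde{L}u = 0\} = \{u \in C^\infty(M) \mid Lu = 0\} = N(g)$, so the null space is independent of the choice of conformal metric. (More generally the same computation shows that $u$ is an eigenfunction of $L$ with eigenvalue $0$ in a metric-independent sense, while nonzero eigenvalues do get rescaled; this is why the index and nullity, but not the individual nonzero spectrum, are metric-independent.)

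I would carry out the argument in this order: first write down the conformal change $\widetilde{\lambda} = \rho\lambda$ and record the transformation rules for $\Delta$ and for $|dG|^2$ from the formulas given in the excerpt; then combine them to get $\widetilde{L} = \rho^{-1}L$; and finally observe that multiplication by the nowhere-vanishing function $\rho^{-1}$ does not change the kernel. The only mild subtlety worth a sentence is that one should note $|dG|^2$ genuinely depends on the metric only through the conformal factor $\lambda$ (the numerator $|\partial G/\partial z|^2$ is computed purely from the conformal structure), which is why it scales by precisely $\rho^{-1}$ in step one; there is no real obstacle here, just the bookkeeping of conformal weights.
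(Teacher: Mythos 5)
Your proof is correct and is essentially the paper's own argument: with $\phi=\lambda/\widetilde{\lambda}=\rho^{-1}$, your identity $\widetilde{L}=\rho^{-1}L$ is exactly the paper's $\widetilde{L}=\phi L$, from which the equality of kernels follows since the factor is positive and globally defined. You merely spell out the intermediate scaling of $\Delta$ and $|dG|^2$, which the paper leaves implicit.
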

Actually, 
$\widetilde{L}=\phi L,\,\,
\phi=\frac{\lambda}{\widetilde{\lambda}}>0$ for two conformal metrics $ds^2=\lambda dzd\overline{z}, \,\,\widetilde{ds^2}=\widetilde{\lambda} dzd\overline{z}$. 
(Note that $\phi$ is globally defined on $M$.)
Therefore, $\widetilde{L}u=0$ if and only if $Lu=0$. 
\hfill$\qedsymbol$

We now define the index and the nullity of $g$. 
\noindent
\begin{Definition}
We define the index $\mathrm{Ind}(g)$ of the meromorphic function $g$ as 
the number of negative eigenvalues (counted with multiplicities) of the operator $L$, 
and define the nullity $\mathrm{Nul}(g)$ of $g$ as
\begin{align*}
\mathrm{Nul}(g)=\dim N(g). 
\end{align*} 
\end{Definition}

The nullity $\mathrm{Nul}(g)$ does not depend on the choice of conformal metric 
$ds^2$ by Claim \ref{nul}. 
The index $\mathrm{Ind}(g)$ also does not depend on the choice of $ds^2$ 
by the following discussion.

\newcommand{\C}{\mathbb{C}}
\newcommand{\R}{\mathbb{R}}

The bilinear form associated with $L$ is represented by $Q\colon C^\infty(M)\times C^\infty(M)\to\mathbb{R}$. That is, for $u,v\in C^\infty(M)$, we define $Q(u,v)$ as 
\begin{align*}
Q(u, v)&=\int_{M}(Lu)vdA\notag\\
&=\int_{M}(-\Delta u-\left\vert dG\right\vert^2u)vdA. 
\end{align*}
$Q$ is symmetric. 
\begin{align*}
Q(u,u)=\int_{M}\left(|du|^2-|dG|^2u^2\right)dA
\end{align*}  
for a function $u$ on $M$. 

\noindent
\begin{Remark}\label{invariance}
$Q$ does not depend on the choice of conformal metric $ds^2$ on $M$. 

Actually, 
if we let $ds^2=\lambda(dx^2+dy^2)$, we obtain 
\begin{align}\label{Qproof}
Q(u,u)
=\int_{M}\left(\left(\frac{\partial u}{\partial x}\right)^2+\left(\frac{\partial u}{\partial y}\right)^2-\left(\left|\frac{\partial G}{\partial x}\right|^2+\left|\frac{\partial G}{\partial y}\right|^2\right)u^2\right)dxdy. 
\end{align}
Since $\lambda$ is not included in the rightmost side of \eqref{Qproof}, $Q$ does not depend on the choice of conformal metric $ds^2$ on $M$. 
\end{Remark}

Let the eigenspace $V_\lambda$ corresponding to the eigenvalue $\lambda$ of $L$ be 
$V_\lambda=\left\{u\in C^\infty(M)\mid Lu=\lambda u\right\}$, 
$\mu_1<\mu_2<\dotsb<\mu_k<0$ be the set of all negative eigenvalues of $L$, and $V=V_{\mu_1}\oplus V_{\mu_2}\oplus\dotsb\oplus V_{\mu_k}\subset C^\infty (M)$. 
\noindent
\begin{Claim}\label{claim_negative}
$Q$ is negative definite on $V$, that is, $Q(u, u)<0$ for any $u\in V\smallsetminus\{0\}$. 
\end{Claim}
Actually, for any $u\in V$, $u$ can be written as 
$u=\sum_{i=1}^{k}u_i,\quad u_i\in V_{\mu_i}. $
Then 
\begin{align*}
Q(u,v)=\int_{M}(Lu)udA
=\sum_{i=1}^{k}\mu_i\int_{M}u_i^2dA, 
\end{align*}
Therefore, when $u\neq 0$, $Q(u, u)<0$. 
\hfill$\qedsymbol$

In fact, one can show that $V$ is a maximal subspace of $C^\infty (M)$ on which 
$Q$ is negative definite. 
Thus, the index $\mathrm{Ind}(g)$ coincides with the dimension of a maximal subspace 
of $C^\infty(M)$ on which $Q$ is negative definite. 
Hence, by Remark \ref{invariance}, $\mathrm{Ind}(g)$ does not depend on the choice of 
conformal metric $ds^2$.

Let $ds_{S^2}^{2}=\frac{4}{(1+|z|^2)^2}dzd\bar{z}$ be the standard Riemannian metric on $\widehat{\mathbb{C}}.$ Let $\zeta$ be a local holomorphic coordinate on $M$. The pull-back $ds_{g}^{2}$ of $ ds_{S^2}^{2}$ by $g$ can be written as $ ds_{g}^{2}=g^* ds_{S^2}^{2}=\lambda d\zeta d{\bar \zeta}$, where $\lambda=\frac{4|g^\prime|^2}{(1+|g|^2)^2}.$  Let $\Delta_g=\frac{4}{\lambda}\frac{\partial^2}{\partial \zeta\partial\bar{\zeta}}$ be the Laplacian of $ds_{g}^{2}$. If $L$ corresponding to this $ds_{g}^{2}$ is represented by $L_g$, $L_g=-\Delta_g-2$. 

Since $\lambda=0$ at the ramification points of $g$, $ds^2_g=0$ at the ramification points. Although $ds^2$ is not strictly a conformal metric, one can show that 
$\mathrm{Ind}(g)$ can be computed as the number of negative eigenvalues (counted with multiplicities) of $L_g$ and $\mathrm{Nul}(g)$ can be computed as the multiplicity of the eigenvalue $0$ of $L_g$. 
In other words, we have the following. 

\noindent
\begin{Lemma}\rm\label{def-Ind}
$\mathrm{Ind}(g)$ can be computed as the number of eigenvalues (counted with multiplicities) of $-\Delta_g$ which are smaller than $2$. 
 $\mathrm{Nul}(g)$ can be computed as the multiplicity of the eigenvalue $2$ of $-\Delta_g$. 
 \end{Lemma}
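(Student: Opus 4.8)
The plan is to reduce the assertion to the conformal-invariance argument behind Claim~\ref{nul} and Remark~\ref{invariance}, the only genuinely new ingredient being the behaviour of the Laplacian of $ds_g^2$ near the ramification points $p_1,\dots,p_\mu$, where this metric degenerates. First I would record, directly from the conformality of $G$, the pointwise identity $|dG|^2=2\lambda_g/\lambda$, where $\lambda_g=4|g'|^2/(1+|g|^2)^2$ is the conformal factor of $ds_g^2$ and $\lambda$ that of the fixed smooth metric $ds^2$ (equivalently $|dG|_g^2\equiv2$, which is why $L_g=-\Delta_g-2$). Writing $\phi:=\lambda_g/\lambda=\tfrac12|dG|^2$ --- a nonnegative smooth function on $M$ vanishing to order $2e_i$ at $p_i$ and positive elsewhere --- one has $L=-\Delta-|dG|^2=-\Delta-2\phi$ and $\Delta_g=\phi^{-1}\Delta$, hence $L_g=\phi^{-1}L$ on $M\smallsetminus\{p_1,\dots,p_\mu\}$. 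Where $\phi>0$ this is exactly the relation used to prove metric-independence (any $u$ with $-\Delta_g u=\nu u$ satisfies $Lu=(\nu-2)u$), so the content of the lemma is that the zeros of $\phi$ at the finitely many points $p_i$ do not affect the conclusion.

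Next I would reformulate both sides variationally. On the smooth side, since $g$ is meromorphic on a compact surface, $|dG|^2$ is a bounded smooth function for $ds^2$, so $Q(u,u)=\int_M(|du|^2-|dG|^2u^2)\,dA$ is a bounded symmetric form on the Sobolev space $W^{1,2}(M)$ (which does not depend on the smooth conformal metric chosen), and by density of $C^\infty(M)$ together with the openness of negative-definiteness on finite-dimensional subspaces, $\mathrm{Ind}(g)$ equals the largest dimension of a subspace of $W^{1,2}(M)$ on which $Q$ is negative definite, while $\mathrm{Nul}(g)=\dim\{u\in W^{1,2}(M):Q(u,\cdot)\equiv0\}=\dim\ker L$. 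On the singular side I would take the Friedrichs extension of $-\Delta_g$ on $L^2(M,dA_g)$, i.e.\ the self-adjoint extension attached to the closure of the energy form; its quadratic form is $u\mapsto\int_M|du|_g^2\,dA_g=\int_M|du|^2\,dA$ by conformal invariance of the Dirichlet energy in dimension two, and $\int_M u^2\,dA_g=\int_M\phi\,u^2\,dA=\tfrac12\int_M|dG|^2u^2\,dA$. Granting that this operator has discrete spectrum and that its form domain is exactly $W^{1,2}(M)$, the min--max principle gives its $k$-th eigenvalue as $\nu_k=\min_{\dim V=k}\max_{0\ne u\in V}\bigl(\int_M|du|^2\,dA\bigr)/\bigl(\int_M u^2\,dA_g\bigr)$, and $\nu_k<2$ holds for the first $m$ indices $k$ precisely when there is an $m$-dimensional $V$ with $\int_M|du|^2\,dA-\int_M|dG|^2u^2\,dA=Q(u,u)<0$ on $V\smallsetminus\{0\}$; hence the number of eigenvalues of $-\Delta_g$ below $2$ equals $\mathrm{Ind}(g)$, and the $\nu=2$ eigenspace of $-\Delta_g$ coincides with $\ker L$, which is the nullity statement.

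The one step that is not purely formal --- and the main obstacle --- is the spectral theory of the Laplacian of the degenerate metric $ds_g^2$ near the $p_i$: one must justify that the Friedrichs extension has discrete spectrum, that its form domain really is all of $W^{1,2}(M)$, and that its eigenfunctions extend continuously across the $p_i$ and are classical solutions off them (so that the counts above are counts of genuine eigenvalues and eigenfunctions, and in particular so that the $\nu=2$ eigenspace is the smooth object $\ker L$). Here I would use that $ds_g^2$ has a conical singularity of cone angle $2\pi(e_i+1)\ge4\pi$ at $p_i$: in a local coordinate $z$ centred at $p_i$ one has $ds_g^2=\psi\,|z|^{2e_i}|dz|^2$ with $\psi$ smooth and positive, and the degree-$(e_i+1)$ branched covering $z\mapsto w=z^{e_i+1}$ exhibits $ds_g^2$ as $(\text{smooth positive})\cdot(\text{pullback of the flat metric }|dw|^2)$; since the two metrics are conformal with a bounded smooth factor they have the same Dirichlet form, comparable $L^2$-norms and the same Friedrichs form domain, and the pulled-back flat metric is just the flat Laplacian downstairs lifted equivariantly, so discreteness, the identification of the form domain (large cone angles cause no loss), elliptic regularity off $\{p_i\}$, and removability of the point singularity (a point has zero $W^{1,2}$-capacity in two dimensions) are all standard. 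Assembling these local facts with the global variational comparison of the previous paragraph yields the stated characterizations of $\mathrm{Ind}(g)$ and $\mathrm{Nul}(g)$.
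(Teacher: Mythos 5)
Your proposal is correct, and it in fact supplies an argument that the paper itself omits: Lemma \ref{def-Ind} is introduced there only with ``one can show that,'' so the genuine content --- that the degeneracy of $ds_g^2$ at the ramification points does not spoil the conformal-invariance argument of Claim \ref{nul} and Remark \ref{invariance} --- is exactly what your variational/Friedrichs-extension proof addresses: the identity $|dG|^2=2\lambda_g/\lambda$, the rewriting of the Rayleigh quotient of $-\Delta_g$ as $\int_M|du|^2\,dA\,/\,\tfrac12\int_M|dG|^2u^2\,dA$, and the min--max comparison with the maximal negative subspaces of $Q$ give precisely the two counts claimed in the lemma, while elliptic regularity plus removability of the point singularities identifies the $\nu=2$ eigenspace with the smooth kernel $N(g)$. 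Two minor remarks. First, a convention mismatch: in this paper $e_i$ is the local multiplicity of $g$ at $p_i$ (this is what makes $\widetilde{B}(w)=2(A_1,\pm B_1)+2(A_2,\pm B_2)-3(\infty,\infty)$ come out right), so $\phi=\lambda_g/\lambda$ vanishes to order $2(e_i-1)$ and the cone angle of $ds_g^2$ at $p_i$ is $2\pi e_i\geqslant 4\pi$; your $2e_i$ and $2\pi(e_i+1)$ use the branching order $e_i-1$ instead. This does not affect the argument, since all that is used is that $\phi\geqslant 0$ vanishes to finite even order at finitely many points. Second, the two analytic facts you flag as the main obstacle (discrete spectrum and form domain equal to $W^{1,2}(M)$) can be obtained more cheaply than via the branched-cover/flat-cone model: since $\phi$ is bounded and is bounded below away from the $p_i$, the Poincar\'e-type inequality $\int_M u^2\,dA\leqslant C\bigl(\int_M|du|^2\,dA+\int_M\phi\,u^2\,dA\bigr)$ shows the form norm is equivalent to the usual $W^{1,2}(M)$ norm, and compactness of $W^{1,2}(M)\hookrightarrow L^2(M,dA)$ together with boundedness of $\phi$ gives the compact embedding into $L^2(M,dA_g)$, hence discreteness; the local cone analysis (or simply the zero $W^{1,2}$-capacity of points) is then needed only for the smoothness of the $\nu=2$ eigenfunctions across the $p_i$, as you note.
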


\begin{Remark}\label{Indg-bar}
Let $M^\prime$ be a complete oriented minimal surface in $\mathbb{R}^3$ and $ds^2$ be the first fundamental form on $M^\prime$. The operator $L$ corresponding to $ds^2$ becomes the Jacobi operator 
$L=-\Delta + 2K$. 
Here, $\Delta$ is the Laplacian corresponding to $ds^2$ and $K$ is the Gaussian curvature of $ds^2$. 
Then the {\em (Morse) index} of $M'$ is defined as 
\begin{align*}
\mathrm{Ind}(M^\prime)=\sup\left\{\dim V\mid V\subset C_{0}^{\infty}(M^\prime), \,Q\,\,\mbox{is negative definete on} \,\,V\right\}. 
\end{align*}
Here, $Q(u,v)=\int_{M^\prime}(Lu)vdA,\,u,v\in C_{0}^{\infty}(M^\prime)$. Furthermore, Fischer-Colbrie \cite{ Fischer-Colbrie} proved that when the total curvature of $M'$ is finite, the index of $M'$ coincides with the index of the extended Gauss map of $M'$. 
\end{Remark}

\begin{Example}
The index of the catenoid is $1$. 
In fact, the catenoid is identified with $\mathbb{C}-\{0\}$ as a Riemann surface, and its extended Gauss map is the meromorphic function $g(z)=z$ on $\widehat{\mathbb{C}}$. Therefore, $ds^2_g=ds^2_{S^2}$, the standard metric of the unit sphere. Thus, $\mathrm{Ind}(g)$ coincides with the number of eigenvalues (counted with multiplicities) of $-\Delta_{S^2}$ which are smaller than 2. Since $0$ is the only such eigenvalue and has multiplicity $1$, we conclude that $\mathrm{Ind}(g)=1$. Thus, the index of the catenoid is $1$ by Remark \ref{Indg-bar}. 
\end{Example}

\noindent
\begin{Proposition}\label{def-Nul 3} 
$\mathrm{Nul}(g)\geqslant 3$. 
\end{Proposition}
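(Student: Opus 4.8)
The plan is to exhibit an explicit $3$-dimensional subspace of $N(g) = \{u \in C^\infty(M) : L_g u = 0\}$, using the fact (Lemma \ref{def-Ind}) that $\mathrm{Nul}(g)$ equals the multiplicity of the eigenvalue $2$ of $-\Delta_g$, i.e. the dimension of the space of solutions of $-\Delta_g u = 2u$. The natural candidates come from the isometries of $S^2$: if $G\colon M \to S^2$ is the holomorphic map corresponding to $g$, then the three coordinate functions of $G$, composed with the inclusion $S^2 \hookrightarrow \mathbb{R}^3$, should be Jacobi fields. Concretely, write $G = (G_1, G_2, G_3)$ with $G_i\colon M \to \mathbb{R}$, and set $u_i = G_i$ for $i = 1,2,3$. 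I claim each $u_i$ satisfies $-\Delta_g u_i = 2 u_i$.

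The key computation is the following. On $S^2$ with its standard metric, each linear coordinate function $x_i$ (restricted to $S^2$) is a first eigenfunction: $-\Delta_{S^2} x_i = 2 x_i$. Since $ds_g^2 = g^* ds_{S^2}^2$ and $G$ is (away from ramification points) a conformal — indeed harmonic — map, the Laplacian transforms so that $\Delta_g (G_i) = (\Delta_{S^2} x_i)\circ G$; here one uses that the pullback metric is exactly $g^*ds_{S^2}^2$ rather than a rescaling, so no conformal factor intervenes in this particular identity. Therefore $-\Delta_g u_i = -(\Delta_{S^2} x_i)\circ G = (2 x_i)\circ G = 2 u_i$. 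This shows $u_1, u_2, u_3 \in N(g)$. It remains to check linear independence over $\mathbb{R}$: if $\sum c_i u_i = 0$ identically, then $\sum c_i x_i$ vanishes on the image $G(M) \subset S^2$, which is not contained in any great circle (since $g$ is nonconstant, $G$ is open and its image has nonempty interior in $S^2$), forcing $c_1 = c_2 = c_3 = 0$. Hence $\dim N(g) \geqslant 3$, i.e. $\mathrm{Nul}(g) \geqslant 3$.

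The one subtlety — and the step I expect to require the most care — is the behavior at the ramification points of $g$, where $\lambda = 0$ and $ds_g^2$ degenerates, so $\Delta_g$ is singular there. The functions $G_i$ are nonetheless smooth on all of $M$ (they are the coordinate components of the globally defined smooth map $G$), so one must verify that the identity $-\Delta_g G_i = 2 G_i$, established on $M$ minus the ramification divisor, genuinely certifies $G_i$ as an eigenfunction in the sense relevant to Lemma \ref{def-Ind}. This is exactly the content of the remark preceding Lemma \ref{def-Ind} (that $\mathrm{Ind}(g)$ and $\mathrm{Nul}(g)$ can be read off from $L_g$ despite the degeneracy), so I would invoke that discussion; alternatively one argues directly that each $G_i$ lies in the relevant form domain and satisfies $Q(G_i, v) = 0$ for all test functions $v$, the contribution near each ramification point being negligible because $G_i$ is bounded and smooth while the degeneracy locus has measure zero. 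With that caveat handled, the proof is complete.
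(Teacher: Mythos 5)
Your proof is correct and rests on the same key idea as the paper's: the three coordinate functions $G_1,G_2,G_3$ of the holomorphic map $G$ span a three-dimensional subspace of $N(g)$ (the paper denotes it $L(g)=\{a\cdot G\mid a\in\mathbb{R}^3\}$), and linear independence holds because otherwise the image of the nonconstant holomorphic map $G$ would lie in a great circle. Where you differ is in how membership in $N(g)$ is verified. The paper works with an arbitrary smooth conformal metric $ds^2$ and invokes the harmonic map equation $\Delta G+\left|dG\right|^2G=0$ satisfied by the holomorphic map $G$, so that $LG_i=0$ holds on all of $M$ with no degeneracy anywhere; ramification points cause no trouble because $\Delta$ and $\left|dG\right|^2$ are taken with respect to a genuine metric. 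You instead pass to the pulled-back metric $ds_g^2=g^*ds_{S^2}^2$, use that $G$ is a local isometry off the ramification locus together with $-\Delta_{S^2}x_i=2x_i$, and then must repair the degeneracy of $ds_g^2$ at the ramification points via Lemma \ref{def-Ind} or the quadratic-form argument you sketch. That repair is legitimate --- it is exactly the content of the discussion preceding Lemma \ref{def-Ind} --- but it is an extra step forced on you by your choice of metric; the paper's formulation with a nondegenerate conformal metric makes the issue disappear, and since $N(g)$ is independent of the conformal metric (Claim \ref{nul}) nothing is lost by that choice. In short: same subspace and same independence argument, but the harmonic-map-equation route is cleaner because it never touches the degenerate metric.
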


\begin{proof} 
We define $L(g)$ as $L(g)=\left\{a\cdot G\mid a\in\mathbb{R}^3\right\}$.  
Since $G=(G_1,G_2,G_3)\colon M\to S^2\subset \mathbb{R}^3$ is a holomorphic map, 
$G$ satisfies the harmonic map equation $\Delta G+\left|dG\right|^2G=0$. Therefore, $a\cdot G\in N(g)$. Thus, 
$L(g)\subset N(g)$.  
The dimension of $L(g)$ is three. In fact, if this is not true, we have a linear relation 
$a_1 G_1 + a_2 G_2 +a_3 G_3=0$, and this means that the image of $G$ lies in a great circle 
of $S^2$. But this implies $G$ is a constant map (and $g$ is a constant function) 
as it is holomorphic. This contradicts the assumption that $g$ is nonconstant.
Therefore, $\dim L(g)=3$ and $\dim N(g)\geqslant 3$. 
This completes the proof of the proposition. 
\end{proof}

As mentioned in the above proof, $L(g)\subset N(g)$, and $\mathrm{Nul}(g)> 3$ if and only if 
$N(g)\setminus L(g)\neq \emptyset$. 
In order to compute $\mathrm{Nul}(g)$, we recall the work of Ejiri-Kotani \cite{Ejiri-Kotani} 
and Montiel-Ros \cite{Montiel-Ros}. 
They observed that an element of $N(g)\setminus L(g)$ appears in the following way. 
\begin{Definition}
Let $X\colon M'=M\setminus \{p_1,\cdots, p_\mu\}\to \mathbb{R}^3$ be a complete branched minimal immersion 
of finite total curvature, where $M$ is a compact Riemann surface. 
An end $p_i$ is said to be {\em planer} if there exists a unit vector $a_i\in S^2$ such that 
$\langle X, a_i \rangle$ is bounded in a neighborhood of $p_i$. 
\end{Definition}
\begin{Proposition}[Ejiri-Kotani \cite{Ejiri-Kotani} and Montiel-Ros \cite{Montiel-Ros}] \rm
Let $g\colon M\to \widehat{\mathbb{C}}$ be a nonconstant meromorphic function on a compact Riemann surface $M$, 
and $G\colon M\to S^2$ be the holomorphic map corresponding to $g$. 
Let $X\colon M'=M\setminus \{p_1,\cdots, p_\mu\}\to \mathbb{R}^3$ be a 
complete branched minimal immersion of finite total curvature whose extended Gauss map is $g$ 
and whose ends are all planer. 
Then $u=\langle X, G \rangle\colon M'\to \mathbb{R}$, the support function of $X$, extends to $M$ smoothly 
and gives an element of $N(g)\setminus L(g)$. 
\end{Proposition}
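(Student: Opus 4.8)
The plan is to verify three things in turn: that $u=\langle X,G\rangle$ satisfies $Lu=0$ on the part of $M'$ where $X$ is an immersion, that $u$ extends smoothly across each end $p_i$ and across any interior branch point of $X$, and that the resulting function on $M$ does not lie in $L(g)$. For the first, I would compute with the first fundamental form $ds^2$ of $X$; this is legitimate because, by Claim \ref{nul}, membership in $N(g)$ does not depend on the choice of conformal metric. In an isothermal coordinate $(x,y)$ for $ds^2$, minimality makes the coordinate functions of $X$ harmonic, so $X_{xx}+X_{yy}=0$; the Gauss map of a minimal surface is a harmonic map into $S^2$, so $\Delta G+|dG|^2G=0$, that is $G_{xx}+G_{yy}=-\lambda|dG|^2 G$; and the cross term $\langle X_x,G_x\rangle+\langle X_y,G_y\rangle$ equals $-2\lambda H=0$ since the trace of the second fundamental form vanishes. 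Expanding $\Delta u=\tfrac1\lambda(\partial_x^2+\partial_y^2)\langle X,G\rangle$ and inserting these three identities gives $\Delta u+|dG|^2 u=0$, i.e. $Lu=0$; I would not write out this routine calculation.

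For the extension across a planar end $p_i$, the definition gives a unit vector $a_i$ with $\langle X,a_i\rangle$ bounded near $p_i$, and, $X$ having finite total curvature, $g$ extends meromorphically across $p_i$, so $G$ extends continuously with $G(p_i)=\pm a_i$, the normal of the limiting plane. Then $\langle X,G(p_i)\rangle=\pm\langle X,a_i\rangle$ is bounded near $p_i$, while the finite-total-curvature asymptotics of $X$ and of $g$ near the puncture bound $|X|\,|G-G(p_i)|$ and hence $u-\langle X,G(p_i)\rangle=\langle X,G-G(p_i)\rangle$; so $u$ itself is bounded near $p_i$. Since $g$ or $1/g$ is holomorphic at $p_i$, in a holomorphic coordinate $\zeta$ centred there the conformal factor of $ds_g^2$ is smooth and $L_gu=0$ reads $u_{\zeta\bar\zeta}+qu=0$ with $q$ smooth; a bounded solution of such an equation extends smoothly across $\zeta=0$ by the removable-singularity form of elliptic regularity, and the same argument removes any interior branch point of $X$. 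By metric independence and continuity the extended $u$ then lies in $N(g)$.

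Finally, to see $u\notin L(g)$, suppose $u=\langle a,G\rangle$ for some $a\in\mathbb{R}^3$; replacing $X$ by $X-a$, which alters neither $G$ nor minimality, we may assume $\langle X,G\rangle\equiv0$, so the position vector $X$ is everywhere tangent to $M'$. On the dense open set where $X$ is immersive and nonzero, put $r=|X|$; since $X$ is tangent one has $\nabla_{M'}(r^2/2)=X$, hence $|\nabla r|\equiv1$, so the integral curves of $\nabla r$ are unit-speed geodesics. Along such a geodesic $\gamma$, since $dX$ is the inclusion of the tangent plane and $\nabla r=X/r$, one finds that $X\circ\gamma$ is a ray emanating from the origin; thus $X(M')$ is a union of rays through $0$, i.e. a minimal cone in $\mathbb{R}^3$, which must be a plane through $0$. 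By real-analyticity of minimal surfaces $M'$ then lies in a plane, forcing $g$ to be constant, contrary to hypothesis; hence $u\in N(g)\setminus L(g)$, which (since $L(g)\subset N(g)$) also gives $\mathrm{Nul}(g)>3$ whenever such an $X$ exists. Of the three steps, the Jacobi computation and the cone rigidity are routine; the planar-end extension, which hinges on the precise asymptotics of a complete finite-total-curvature minimal end, is the point I expect to require the most care.
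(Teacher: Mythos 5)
The paper itself offers no proof of this Proposition: it is quoted from Ejiri--Kotani and Montiel--Ros, so there is nothing internal to compare your argument against. Judged on its own terms, your sketch is the standard proof and its three steps are all sound: the Jacobi-field computation for the support function (harmonicity of $X$, the harmonic map equation for $G$, and vanishing of the mixed term by minimality) is correct, and the metric-independence provided by Claim \ref{nul} lets you switch freely between the induced metric and $ds^2_g$; the cone-rigidity argument for $u\notin L(g)$ (translate so that $\langle X,G\rangle\equiv 0$, deduce $|\nabla\,|X||\equiv 1$, hence the image is a cone, hence a plane, hence $g$ constant) is also a complete and correct argument.

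The one place where you assert rather than prove is exactly the step you flag: the bound on $|X|\,|G-G(p_i)|$ at a planar end. To close it, write the end in Weierstrass form with $g(p_i)=0$ after a rotation, $g=a_k\zeta^k+\dotsb$ and $\eta$ with a pole of order $\mu$ at $\zeta=0$; the period condition rules out $\mu=1$, so $|X|=O(|\zeta|^{-(\mu-1)})$, and boundedness of $\langle X,a_i\rangle=\pm X_3=\pm\,\mathrm{Re}\!\int 2g\eta$ forces $g\eta$ to be holomorphic at the puncture (a pole of order $\geqslant 2$ gives polynomial growth, and a simple pole has nonzero residue, which after imposing single-valuedness gives a $\log|\zeta|$ term), i.e.\ $k\geqslant\mu$. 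Hence $|X|\,|G-G(p_i)|=O(|\zeta|^{\,k-\mu+1})\to 0$, so $u$ is bounded and your removable-singularity argument applies; this also justifies, along the way, your identification $G(p_i)=\pm a_i$. With that detail supplied, the proposal is a correct reconstruction of the cited result.
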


On the contrary, Ejiri-Kotani \cite{Ejiri-Kotani} and Montiel-Ros \cite{Montiel-Ros} proved 
that any element of $N(g)\setminus L(g)$ appeared as the support function of a complete branched 
minimal surface with planer ends. 
\begin{Theorem}[Ejiri-Kotani \cite{Ejiri-Kotani} and Montiel-Ros \cite{Montiel-Ros}]
\label{Thm-$H$} \rm
For any $u\in N(g)\smallsetminus L(g)$, there exists a complete branched minimal immersion 
$X\colon M'=M\smallsetminus \{p_1,\dotsb,p_\mu\}\to\mathbb{R}^3$ whose ends are all planer and whose extended Gauss map coincides with $g$ such that $u=\langle X,G\rangle$ on $M$, where 
$G\colon M\to S^2$ is 
the holomorphic map corresponding to $g$. In terms of the Weierstrass representation formula, this 
assertion is stated as follows. 
Let $H(g)$ be the real vector space defined by 
\begin{align}\label{H(g)}
H(g) =&\Big\{\omega\in H^0(K(M) + \widetilde{B}(g))\mid\mathrm{Res}_{p_i}\omega= 0, \forall i=1,\dotsb,\mu, \notag\\ 
&\hspace{20mm}\mathrm{Re}\int_{\alpha}((1 -g^2), i(1 + g^2), 2g)\omega =0, \,\,\forall\alpha\in H_1(M, \mathbb{Z})  \Big\},  
\end{align}
where $K(M)$ is the canonical divisor of $M$, $p_i$ are the ramification points of $g$ with ramification indices $e_i$, $P(g)$ is the polar divisor of $g$ and  $\widetilde{B}(g)$ is the divisor defined by $\widetilde{B}(g)=\sum_{i=1}^{\mu}e_ip_i-2P(g)$. 
Then for any $u\in N(g)\smallsetminus L(g)$, there exists $\omega\in H(g)\setminus \{0\}$ such that 
$u=\langle X_\omega,G\rangle$ on $M$, where $X_\omega\colon M\smallsetminus\{p_1,\dotsb, p_\mu\}\to 
\mathbb{R}^3$ is the branched minimal immersion defined by 
\begin{align}
X_\omega (p)=\mathrm{Re}\int_{p_0}^{p}((1 -g^2), i(1 + g^2), 2g)\omega. 
\end{align}
In particular, we have the linear isomorphism $N(G)/L(G)\cong H(g)$. 
\end{Theorem}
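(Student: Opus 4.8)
The plan is to construct mutually inverse maps (up to $L(g)$) between $H(g)\setminus\{0\}$ and $N(g)\setminus L(g)$, the whole argument being driven by two pointwise identities. Fix a local coordinate $z$, write $ds_g^2=2\rho\,dz\,d\bar z$ with $\rho=\langle G_z,G_{\bar z}\rangle$, where $\langle\cdot,\cdot\rangle$ is the $\mathbb{C}$-bilinear extension of the Euclidean inner product on $\mathbb{R}^3$. By Lemma~\ref{def-Ind}, $u\in N(g)$ iff $u_{z\bar z}=-\rho\,u$; the same equation is satisfied by each component $G_1,G_2,G_3$ since $G$ is harmonic into $S^2$, i.e. $G_{z\bar z}=-\rho\,G$; and because $G\colon(M,ds_g^2)\to(S^2,ds_{S^2}^2)$ is a local isometry off the branch points of $g$, the metric $ds_g^2$ has Gaussian curvature $1$ there, so $(\log\rho)_{z\bar z}=-\rho$. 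One also records $\overline{G_z}=G_{\bar z}$, $\langle G_z,G_z\rangle=0$, $G_{zz}=(\log\rho)_z\,G_z$, and that $((1-g^2),i(1+g^2),2g)$ is a nonvanishing multiple of $\overline{G_z}$.

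First I would build the linear map $\Psi\colon H(g)\to N(g)$, $\omega\mapsto\langle X_\omega,G\rangle$ (with $X_\omega(p)=\mathrm{Re}\int_{p_0}^p\Phi$, $\Phi=((1-g^2),i(1+g^2),2g)\,\omega$, $p_0$ fixed). The divisor condition $\omega\in H^0(K(M)+\widetilde{B}(g))$ makes $\Phi$ holomorphic on $M'$: the forced zeros of order $\ge 2k$ of $\omega$ at the poles of $g$ cancel the poles of the coefficient vector. The residue condition $\mathrm{Res}_{p_i}\omega=0$ forces $\mathrm{Res}_{p_i}\Phi=0$, because in a coordinate $t$ with $g-g(p_i)=t^{e_i}$ the coefficient vector is a power series in $t^{e_i}$ and $\omega$ has pole order $\le e_i$; the same two facts, together with the vanishing of $\langle((1-g^2),i(1+g^2),2g),G(p_i)\rangle$ to order exactly $e_i$ at $p_i$, make $\langle\Phi,G(p_i)\rangle$ holomorphic at $p_i$. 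Combined with the period condition in \eqref{H(g)}, $X_\omega$ is a well-defined branched minimal immersion on $M'$ with Gauss map $g$ and planar ends (the plane at $p_i$ having normal $G(p_i)$). The standard fact that the support function of a minimal immersion is a Jacobi field, together with Claim~\ref{nul}, gives $L\,u_\omega=0$, and the planar-end estimate gives a smooth extension of $u_\omega$ across each $p_i$, so $u_\omega\in N(g)$. If $\Psi(\omega)=a\cdot G$, then $Y:=X_\omega-a$ has $\langle Y,G\rangle\equiv 0$ and, differentiating, $\langle Y,dG\rangle\equiv 0$; since $dG$ has rank two, $Y$ is everywhere parallel to $G$, forcing $Y\equiv 0$ and $\omega=0$. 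Thus $\Psi$ is injective and $\operatorname{image}(\Psi)\cap L(g)=\{0\}$.

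For the reverse direction — the crucial one — given $u\in N(g)\setminus L(g)$, solving $\langle X,G\rangle=u$, $\langle X,G_z\rangle=u_z$, $\langle X,G_{\bar z}\rangle=u_{\bar z}$ in the frame $\{G,G_z,G_{\bar z}\}$ forces
\begin{align*}
X=u\,G+\frac{u_{\bar z}}{\rho}\,G_z+\frac{u_z}{\rho}\,G_{\bar z},
\end{align*}
which is real, single-valued on $M'$, and satisfies $\langle X,G\rangle=u$ by the orthogonality relations. Differentiating and using $G_{z\bar z}=-\rho G$ (which kills the $u_zG$ term), then $G_{zz}=(\log\rho)_zG_z$ together with the Jacobi equation $u_{z\bar z}=-\rho u$ (which kills the $uG_z$ term), all terms collapse to $X_z=(u_z/\rho)_z\,G_{\bar z}$, and $\rho\,(u_z/\rho)_z=u_{zz}-u_z(\log\rho)_z$ is holomorphic on $M'$ — here the curvature identity $(\log\rho)_{z\bar z}=-\rho$ is used once more. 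Hence $X_z$ is holomorphic and $\langle X_z,X_z\rangle=(u_z/\rho)_z^2\langle G_{\bar z},G_{\bar z}\rangle=0$, so $X$ is a branched minimal immersion, with Gauss map $g$ because $G_{\bar z}$ is a multiple of $((1-g^2),i(1+g^2),2g)$. This presents $X_z\,dz$ in Weierstrass form with a $1$-form $\omega$, namely $\omega=Q/(2\,dg)$ where $Q=\bigl(u_{zz}-u_z(\log\rho)_z\bigr)(dz)^2$ is a holomorphic quadratic differential on $M'$; single-valuedness of $X$ yields the period condition, and smoothness of $u$ at each $p_i$ forces $Q$ to have at worst a simple pole there and $\mathrm{Res}_{p_i}\omega=0$, so $\omega$ extends meromorphically to $M$ with $D(\omega)+\widetilde{B}(g)\ge 0$, i.e. $\omega\in H(g)\setminus\{0\}$, and $u=\langle X_\omega,G\rangle$ after the evident translation (whose only effect is an $L(g)$-term from the additive constant). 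Since $\operatorname{image}(\Psi)$ is a subspace of $N(g)$ containing all of $N(g)\setminus L(g)$ and meeting $L(g)$ only in $0$, we conclude $N(g)=\operatorname{image}(\Psi)\oplus L(g)$, and $\Psi$ induces the isomorphism $N(g)/L(g)\cong H(g)$.

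I expect the main obstacle to be the local analysis at the points $p_i$, not the algebra: in the forward direction, checking that $u_\omega$ is genuinely $C^\infty$ (not merely bounded) across each planar end and across the branch points of $g$; in the reverse direction, checking that $u_{zz}-u_z(\log\rho)_z$ patches into a global meromorphic $1$-form on $M$ with exactly the prescribed divisor bound and, crucially, with vanishing residues at the $p_i$. This last point is where the smoothness of $u$ at $p_i$ — equivalently, the hypothesis that all ends are planar — must be exploited in full, through an expansion of $u$ and $G$ in the coordinate $t$ adapted to the ramification of $g$ at $p_i$.
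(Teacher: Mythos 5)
The paper itself offers no proof of this statement --- it is quoted as a known theorem of Ejiri--Kotani \cite{Ejiri-Kotani} and Montiel--Ros \cite{Montiel-Ros} --- so the only meaningful comparison is with the argument in those references, and your sketch is essentially that argument: the frame formula $X=uG+\rho^{-1}u_{\bar z}G_z+\rho^{-1}u_zG_{\bar z}$, the collapse of $X_z$ to a multiple of $G_{\bar z}$ via $G_{z\bar z}=-\rho G$, $u_{z\bar z}=-\rho u$ and $(\log\rho)_{z\bar z}=-\rho$, and the identification of the resulting Weierstrass one-form $\omega=Q/(2\,dg)$, all of which check out (including the normalization relative to the paper's $X_\omega=\mathrm{Re}\int\Phi\omega$), as do the single-valuedness argument for the period condition and the injectivity of $\omega\mapsto\langle X_\omega,G\rangle$ modulo $L(g)$. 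The one substantive piece left open is the one you flag yourself: the local analysis at the ramification points $p_i$ (smooth extension of the support function across planar ends in one direction; the divisor bound $D(\omega)+\widetilde{B}(g)\geqslant 0$ and vanishing of $\mathrm{Res}_{p_i}\omega$ in the other), which is precisely where the cited papers carry out their technical work, so your proposal should be regarded as a correct outline of the standard proof rather than a complete one.
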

\begin{Corollary}[Ejiri-Kotani \cite{Ejiri-Kotani} and Montiel-Ros \cite{Montiel-Ros}]\rm
\label{Thm-$Nul$}
$\mathrm{Nul}(g)$ can be computed from the dimension of $H(g)$ by the following formula$\colon$ 
\begin{align}
\mathrm{Nul}(g)-3=\dim_\mathbb{R}H(g). 
\end{align}
\end{Corollary}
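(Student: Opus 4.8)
The plan is to obtain the formula as an immediate dimension count, with Theorem~\ref{Thm-$H$} as the essential input.

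First I would note that $N(g) = \ker L$ is finite-dimensional, since $L$ is an elliptic operator on the compact Riemann surface $M$; hence $\mathrm{Nul}(g) = \dim_{\mathbb{R}} N(g) < \infty$ and the quotient $N(g)/L(g)$ is a well-defined finite-dimensional real vector space. Next, by Proposition~\ref{def-Nul 3} we have $L(g) \subseteq N(g)$ with $\dim_{\mathbb{R}} L(g) = 3$, so that
\[
\dim_{\mathbb{R}} N(g) = 3 + \dim_{\mathbb{R}}\bigl(N(g)/L(g)\bigr).
\]
Then I would invoke the linear isomorphism $N(g)/L(g) \cong H(g)$ provided by Theorem~\ref{Thm-$H$}, under which a form $\omega \in H(g)$ corresponds to the class of the support function $\langle X_\omega, G\rangle$; this gives $\dim_{\mathbb{R}}(N(g)/L(g)) = \dim_{\mathbb{R}} H(g)$. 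Substituting into the displayed identity yields $\mathrm{Nul}(g) = 3 + \dim_{\mathbb{R}} H(g)$, i.e. $\mathrm{Nul}(g) - 3 = \dim_{\mathbb{R}} H(g)$, as claimed.

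There is essentially no obstacle here, since Theorem~\ref{Thm-$H$} is quoted as an established result of Ejiri--Kotani and Montiel--Ros; the corollary is a purely formal consequence. The only point one would have to verify in order to make the argument self-contained is that the correspondence $u \mapsto \omega$ of Theorem~\ref{Thm-$H$} descends to a real-linear bijection $N(g)/L(g) \to H(g)$: well-definedness (changing the base point $p_0$ in $X_\omega$ alters the support function only by an element of $L(g)$, and $L(g)$ itself corresponds to $\omega = 0$), injectivity, and surjectivity onto all of $H(g)$. For the present statement, however, it suffices to cite the theorem and carry out the quotient-space arithmetic.
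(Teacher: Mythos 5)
Your argument is correct and is exactly the derivation the paper intends: the corollary is an immediate dimension count combining $\dim_{\mathbb{R}}L(g)=3$ (Proposition \ref{def-Nul 3}) with the isomorphism $N(g)/L(g)\cong H(g)$ of Theorem \ref{Thm-$H$}. The paper gives no separate proof precisely because it treats the statement as this formal consequence, so your proposal matches the paper's approach.
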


The complex vector space $\widehat{H}(g)$ defined as follows plays an auxiliary role in the computation of $H(g)$. 
  
 \begin{Definition}\label{def-$HH(g)$}
Define a complex vector space $\widehat{H}(g)$ as 
\begin{align*}
\widehat{H}(g) = \left\{\omega\in H^0(K(M) + \widetilde{B}(g))\mid\mathrm{Res}_{p_i}\omega= 0, i=1,\dotsb, \mu\right\}. 
\end{align*}
\end{Definition}

For $t\in\mathbb{C}\setminus\{0\}$, $H(tg)\neq H(g)$ in general, but $\widehat{H}(tg)=\widehat{H}(g)$.

\noindent
\section{\bf Setting}

In this section, we consider a certain family of meromorphic functions of degree three defined on Riemann surfaces homeomorphic to the torus, and describe the above vector spaces in these special cases.

\noindent
\subsection{\bf Torus}

We first define the Riemann surfaces.
Let  
\begin{align*}
M_a=\left\{(z,w)\in\widehat{\mathbb{C}}^2 \mid w^2=z(z-a)\left(z+\frac{1}{a}\right)\right\},\quad a\geqslant 1. 
\end{align*}
If $(r_1,\theta_1), (r_2,\theta_2),(r_3,\theta_3)$ are the polar coordinates centered at $0,a,-\frac{1}{a}$, then $z\in\widehat{\mathbb{C}}$ is represented in three ways as 
$$z=r_1e^{i\theta_1}, z=a+r_2e^{i\theta_2}, z=-\frac{1}{a}+r_3e^{i\theta_3},\quad r_1,r_2,r_3\geqslant0,\,0\leqslant\theta_1,\theta_2,\theta_3<{2\pi}. $$
Define the two branches $w_1, w_2$ of $w$ by
$$w_1=\sqrt{r_1r_2r_3}e^\frac{i({\theta}_1+{\theta}_2+{\theta}_3)}{2},\,\,
w_2=\sqrt{r_1r_2r_3}e^{\frac{i({\theta}_1+{\theta}_2+{\theta}_3)}{2}+\pi}=-w_1. $$
Prepare two copies of the Riemann spheres $\widehat{\mathbb{C}}$, let them be $\widehat{\mathbb{C}}_1$ and $\widehat{\mathbb{C}}_2$, respectively, and consider $w_1$ as a function on $\widehat{\mathbb{C}}_1$ and $w_2$ as a function on $\widehat{\mathbb{C}}_2$. Put in a slit in the half line connecting $z=a$ and $z=\infty$ on $\widehat{\mathbb{C}}_1$, and let the upper part (of the slit) be $l_1$ and the lower part be $\widetilde{l}_1$. Put in a slit in the half line connecting $z=a$ and $z=\infty$ on $\widehat{\mathbb{C}}_2$, and let the upper part be $l_2$ and the lower part be $\widetilde{l}_2$.  Put in a slit in the line segment connecting $z=0$ and $z=-\frac{1}{a}$ on $\widehat{\mathbb{C}}_1$, and let the upper part be $h_1$ and the lower part be $\widetilde{h}_1$. Put in a slit in the line segment connecting $z=0$ and $z=-\frac{1}{a}$ on $\widehat{\mathbb{C}}_2$, and let the upper part be $h_2$ and the lower part be $\widetilde{h}_2$. If $\widetilde{l}_1$ is attached to $l_2$, $h_1$is attached to $\widetilde{h}_2$, $\widetilde{h}_1$ is attached to $h_2$ and $\l_1$ is attached to $\widetilde{l}_2$ so that $w_1$ and $w_2$ are continuously connected, a Riemann surface $M^\prime_a$  homeomorphic to the torus can be obtained. Let $z_1\in\widehat{\mathbb{C}}_1 ,  z_2\in\widehat{\mathbb{C}}_2$, and define the map $\phi :  M^\prime_a\to M_a$ by $z_1\longmapsto (z_1,w_1(z_1)), z_2\longmapsto (z_2,w_2(z_2))$. 
Let $z_1\in l_1\subset \widehat{\mathbb{C}}_1$ and $z_2\in \widetilde{l}_2\subset\widehat{\mathbb{C}}_2$, and suppose $z_1=z_2$ in $M^\prime_a$. Then $\phi(z_1)=\phi(z_2)$ since $w_1(z_1)=w_2(z_2)$. The same holds at the other slits. Therefore, the map $\phi$ is well-defined. It is confirmed that $\phi$ is bijective. Identify $M_a$ with $M^\prime_a$ by this bijection, and consider $M_a$ as a Riemann surface. 

\noindent
\subsection{\bf Vector spaces $H(w)$ and $\widehat{H}(w)$}

In this subsection, we describe the vector spaces, reviewed in the section 2, when the mermorphic function is $w\colon M_a\to \widehat{\mathbb{C}}$.

$z: M_a\ni (z,w)\longmapsto z\in\widehat{\mathbb{C}}$ is a meromorphic function of degree $2$ on $M_a$, has $(\infty,\infty)$ as a  pole of  order $2$, and has $(0,0)$ as a zero of order $2$. $dz$ is  a meromorphic differential, has $(\infty,\infty)$ as a pole of order $3$,and has  $(0,0), (a,0), (-\frac{1}{a},0)$ as  zeros of order $1$, respectively.  

$w: M_a\ni (z,w)\longmapsto w\in\widehat{\mathbb{C}}$ is a meromorphic function of degree $3$ on $M_a$, has $(\infty,\infty)$ as a pole of order $3$, and has $(0,0), (a,0), (-\frac{1}{a},0)$ as  zeros of order $1$, respectively.  
$dw$ is a meromorphic differential, has $(\infty,\infty)$ as a pole of order $4$, and has $(A_1,\pm B_1),(A_2,\pm B_2)$ as  zeros of order $1$, respectively. 
Here, $A_1=\frac{a-\frac{1}{a}+\sqrt{a^2+\frac{1}{a^2}+1}}{3},A_2=\frac{a-\frac{1}{a}-\sqrt{a^2+\frac{1}{a^2}+1}}{3},B_1$ is the value $w_1(A_1)$ of $w_1$ at $z=A_1\in \widehat{\mathbb{C}}_1$, and $B_2$ is the value $w_1(A_2)$ of $w_1$ at $z=A_2\in \widehat{\mathbb{C}}_1$. 

$\frac{dz}{w}$ has neither zero nor pole everywhere. 
Using the nowehre vanishing holomorphic differential $\frac{dz}{w}$, $H(w)$ can also be written as follows. 
\begin{align*}
H(w)=&\Big\{f\frac{dz}{w}\mid f:M_a\to\widehat{\mathbb{C}}\,\,\mbox{is a meromorphic function}, \notag\\
 &\hspace{15mm} D(f)+\widetilde{B}(w)\geqslant 0,\,\,\mathrm{Res}_{p_i}\left(f\frac{dz}{w}\right)=0,\, i=1,\dotsb , \mu, \notag\\
 & \hspace{15mm}\mathrm{Re}\int_{\alpha}((1-w^2), i(1+w^2), 2w)\left(f\frac{dz}{w}\right)=0,\,\forall\alpha \,\,\mbox{closed curve}\Big\},  
\end{align*}
where $D(f)$ is the divisor of $f$.  
Similarly, $\widehat{H}(w)$ can also be written as follows. 
\begin{align*}
\widehat{H}(w)=&\Big\{f\frac{dz}{w} \mid f:M_a\to \widehat{\mathbb{C}}\,\,\mbox{is a meromorphic function},\notag\\ 
&\hspace{20mm}D(f)+\widetilde{B}(w)\geqslant 0,\,\, \mathrm{Res}_{p_i}\left(f\frac{dz}{w}\right)=0,\, i=1,\dotsb , \mu  \Big\}. 
\end{align*}

\noindent
\section{\bf Computation of $\mathrm{Nul}(tw)$}

In this section we compute the dimension of $H(tw)$ and, as a consequence, compute the nullity of $tw$.  
First, we find a basis of $\widehat{H}(w)$. 
\begin{Lemma}\rm\label{widehatH}
$\widehat{H}(w)$
is a three dimensional complex vector space spanned by 
\begin{align*}
&\eta_1=\frac{z-\frac{1}{2}(A_1+A_2)}{(z-A_1)^2(z-A_2)^2}\frac{dz}{w}, \\
&\eta_2=\frac{z^2-A_1A_2}{(z-A_1)^2(z-A_2)^2}\frac{dz}{w}, \\
&\eta_3=\frac{z-\frac{1}{2}(A_1+A_2)}{(z-A_1)^2(z-A_2)^2}dz.  
\end{align*} 
\end{Lemma}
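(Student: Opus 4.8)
The plan is to compute $\widehat{H}(w)$ directly from its definition, exploiting the hyperelliptic structure of $M_a$ together with Riemann--Roch on the torus. First I would pin down the divisor $\widetilde{B}(w)$: since $w$ has a pole of order $3$ at $(\infty,\infty)$ and simple zeros at $(0,0),(a,0),(-\tfrac1a,0)$, and $dw=\tfrac12 p'(z)\,\tfrac{dz}{w}$ where $p(z)=z(z-a)(z+\tfrac1a)$ and $\tfrac{dz}{w}$ is nowhere zero, the finite ramification points of $w$ are exactly the four points lying over the two roots $A_1,A_2$ of $p'(z)=3z^2+2(\tfrac1a-a)z-1$, each with ramification index $2$, while $(\infty,\infty)$ is totally ramified of index $3$. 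Hence $\widetilde{B}(w)=2\big[(A_1,B_1)+(A_1,-B_1)+(A_2,B_2)+(A_2,-B_2)\big]-3(\infty,\infty)$, of degree $5$, and since $K(M_a)$ is trivialized by the holomorphic one-form $\tfrac{dz}{w}$, every element of $H^0(K(M_a)+\widetilde{B}(w))$ has the form $f\,\tfrac{dz}{w}$ with $f$ meromorphic, having poles of order $\le 2$ over $A_1,A_2$ and a zero of order $\ge 3$ at $(\infty,\infty)$.

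Next I would parametrize such $f$. Writing $f=R_1(z)+R_2(z)w$ with $R_1,R_2\in\mathbb{C}(z)$, the polar condition over $A_1,A_2$ forces $R_i=P_i(z)/\big((z-A_1)^2(z-A_2)^2\big)$, and the vanishing condition at $(\infty,\infty)$, analyzed in the local coordinate $v$ with $z\sim v^{-2}$ and $w\sim v^{-3}\times(\text{unit})$ — observing that $R_1$ contributes even and $R_2 w$ odd powers of $v$, so no cancellation occurs — forces $\deg P_1\le 2$ and $\deg P_2\le 1$. This yields a $5$-dimensional space, in agreement with Riemann--Roch. I would then split $f\,\tfrac{dz}{w}=R_1\,\tfrac{dz}{w}+R_2\,dz$ into the anti-invariant and invariant parts for the hyperelliptic involution $\iota\colon(z,w)\mapsto(z,-w)$; this is legitimate because $\iota$ preserves $\widetilde{B}(w)$ and permutes the ramification points of $w$, so $\widehat{H}(w)=\widehat{H}(w)^-\oplus\widehat{H}(w)^+$.

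Then I would impose the residue conditions. Since $f\,\tfrac{dz}{w}$ vanishes at $(\infty,\infty)$ and $\iota$ reverses (resp. preserves) residues of anti-invariant (resp. invariant) forms, the four residue conditions decouple. For the $R_2\,dz$ part they amount to $\mathrm{Res}_{z=A_1}R_2\,dz=0$, which a short residue-of-a-rational-differential computation reduces to $P_2(z)$ being a multiple of $z-\tfrac12(A_1+A_2)$, so $\widehat{H}(w)^+=\mathbb{C}\eta_3$. For the $R_1\,\tfrac{dz}{w}$ part, the key is that $A_i$ is a critical point of $p$, hence $\tfrac{d}{dz}w\big|_{z=A_i}=\tfrac{p'(A_i)}{2w}=0$, so the residue at the point over $A_i$ simplifies to a nonzero multiple of $P_1'(A_i)(A_i-A_j)-2P_1(A_i)$ with $\{i,j\}=\{1,2\}$; setting this to zero at $A_1$ and at $A_2$ gives the \emph{same} linear equation $2a_0+a_1(A_1+A_2)+2a_2A_1A_2=0$ on $P_1=a_0+a_1z+a_2z^2$. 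Hence $\widehat{H}(w)^-$ is $2$-dimensional, spanned by $\eta_1$ (take $a_2=0$) and $\eta_2$ (take $a_1=0$). Combining, $\widehat{H}(w)=\langle\eta_1,\eta_2\rangle\oplus\langle\eta_3\rangle$ is $3$-dimensional; the direct check that each $\eta_k$ satisfies the divisor and residue conditions is contained in the computations above, so $\{\eta_1,\eta_2,\eta_3\}$ is a basis.

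The step I expect to be the main obstacle is the residue computation at the four finite ramification points: these are genuine \emph{double} poles, so each residue is a derivative rather than an evaluation, and it is precisely the identity $p'(A_i)=0$ that makes the resulting expressions tractable. The decisive and somewhat unexpected point is that the two equations produced by $A_1$ and $A_2$ on the anti-invariant part collapse into one; this is exactly what makes $\dim_{\mathbb{C}}\widehat{H}(w)=3$ and not $2$. A secondary technical care is needed at $(\infty,\infty)$, which is simultaneously a branch point of $z$ and the total ramification point of $w$, forcing the use of the coordinate $v$ with $z\sim v^{-2}$, $w\sim v^{-3}$.
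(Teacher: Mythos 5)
Your proposal is correct, and it proves the lemma by a genuinely different route than the paper. You determine $\widehat{H}(w)$ directly: Riemann--Roch gives $\dim H^0(K(M_a)+\widetilde{B}(w))=\deg\widetilde{B}(w)=5$, which matches your explicit parametrization $f=R_1(z)+R_2(z)w$ with $R_i=P_i(z)/\bigl((z-A_1)^2(z-A_2)^2\bigr)$, $\deg P_1\leqslant 2$, $\deg P_2\leqslant 1$ (your parity argument at $(\infty,\infty)$, and implicitly the same parity argument at the branch points $0,a,-\frac1a$, is what rules out cancellations); then the hyperelliptic involution splits the residue conditions by parity, and the computation using $p'(A_i)=0$ shows that the conditions at $A_1$ and at $A_2$ coincide, namely $2a_0+a_1(A_1+A_2)+2a_2A_1A_2=0$ on the anti-invariant part (and likewise $2c_0+c_1(A_1+A_2)=0$ on the invariant part -- note that here too there are two conditions which collapse to one, a point you state only implicitly), yielding exactly the basis $\eta_1,\eta_2,\eta_3$; I have checked these residue identities and they are right. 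The paper argues differently: it verifies by divisor and residue computations that $\eta_1,\eta_2,\eta_3$ lie in $\widehat{H}(w)$ and are linearly independent, and then obtains the upper bound $\dim_{\mathbb{C}}\widehat{H}(w)\leqslant 3$ indirectly, observing that $\dim_{\mathbb{C}}\widehat{H}(w)\geqslant 4$ would leave $\dim_{\mathbb{R}}H(tw)\geqslant 8-6\geqslant 2$ after imposing the six real period equations, hence $\mathrm{Nul}(tw)\geqslant 5$ for all $t$, contradicting Nayatani's small-$t$ result (Theorem \ref{N-d}) since $w$ has a single pole. Your approach is self-contained and structural: it does not invoke Theorem \ref{N-d} nor the period conditions, and it explains why the dimension is $3$ rather than $2$ (the collapse of the residue conditions), at the cost of a longer explicit computation; the paper's argument is shorter but indirect, borrowing an analytic input that it quotes anyway for the later nullity computation.
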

\begin{proof}
 First, we prove that $\eta_1,\eta_2,\eta_3$ are elements of $\widehat{H}(w)$. 
 Note that 
 \begin{align*}
\widetilde{B}(w)=2(A_1,\pm B_1)+2(A_2,\pm B_2)-3(\infty,\infty)
\end{align*}
by 
$$B(w)=(A_1,\pm B_1)+(A_2,\pm B_2)+2(\infty,\infty),\,\, P(w)=3(\infty,\infty). $$ 
We compute the divisors of 
\begin{align*}
& f_1=\frac{z-\frac{1}{2}(A_1+A_2)}{(z-A_1)^2(z-A_2)^2},\\ 
& f_2=\frac{z^2-A_1A_2}{(z-A_1)^2(z-A_2)^2},\\ 
& f_3=\frac{z-\frac{1}{2}(A_1+A_2)}{(z-A_1)^2(z-A_2)^2}w. 
\end{align*}
They are given by
\begin{align*}
&D(f_1)=-2(A_1,\pm B_1)-2(A_2,\pm B_2)+6(\infty,\infty)+(\frac{1}{2}(A_1+A_2),\pm B_3), \\
&D(f_2)=-2(A_1,\pm B_1)-2(A_2,\pm B_2)+4(\infty,\infty)+(\sqrt{A_1A_2},\pm B_4)+(-\sqrt{A_1A_2},\pm B_4), \\
&D(f_3)=-2(A_1,\pm B_1)-2(A_2,\pm B_2)+3(\infty,\infty)+(0,0)+(\frac{1}{2}(A_1+A_2),\pm B_3)+(a,0)\\
&\hspace{15mm}+(\frac{1}{a},0),  
\end{align*}
where $B_3$ is the value of $w_1$  at $z=\frac{1}{2}(A_1+A_2)\in\widehat{\mathbb{C}}_1$ and $B_4$ is the value of  $w_1$ at $z=-\sqrt{A_1A_2}\in\widehat{\mathbb{C}}_1$. 
Therefore, 
$$D(f_i)+\widetilde{B}(w)\geqslant 0,\quad i=1, 2 , 3. $$ 

We compute the residues of $\eta_i,\, i=1, 2 , 3$. 
Let $\zeta=z-A_1$ near $(A_1,\pm B_1)$. $w$ can be written as 
$$w=\pm B_1+b_2\zeta^2 +\dotsb$$
near $\zeta=0$. If we compute by using them, we obtain  
\begin{align*}
&\eta_1\sim \frac{1}{\pm 2B_1(A_1-A_2)\zeta^2}d\zeta, \\
&\eta_2\sim \frac{A_1(A_1+A_2)}{\pm B_1(A_1-A_2)^2\zeta^2}d\zeta, \\
&\eta_3\sim\frac{1}{2 (A_1-A_2)\zeta^2}d\zeta. 
\end{align*}
Therefore, 
$$\mathrm{Res}(\eta_i, (A_1,\pm B_1))=0,\quad i=1,2,3. $$ 
Similarly, we obtain
$$\mathrm{Res}(\eta_i, (A_2,\pm B_2))=0,\quad i=1,2,3. $$ 
 
Next, we prove that $\eta_1,\eta_2, \eta_3$ are linearly independent. 
For $\gamma_1, \gamma_2, \gamma_3\in\mathbb{C}$ we write 
\begin{align}\label{=0}
\gamma_1\eta_1+\gamma_2\eta_2+\gamma_3\eta_3=0.
\end{align}
  If we substitute $z=\frac{1}{2}(A_1+A_2)$ into $\eqref{=0}$, we obtain $\gamma_2=0$. Therefore, $\eqref{=0}$ becomes
\begin{align}\label{=1,2}
\gamma_1\eta_1+\gamma_3\eta_3=0. 
\end{align}
$\eqref{=1,2}$ can be written as   
\begin{align*}
\gamma_1(c_0+c_1\zeta+\dotsb)+\gamma_3(d_1\zeta+\dotsb)=0 ,\quad c_0\neq 0,\,d_1\neq 0,\,\,\mbox{near}\,\, (0,0). 
\end{align*} 
$\gamma_1c_0=0$ when $\zeta\to 0$.  $\gamma_1=0$ by $c_0\neq 0$. Therefore, $\gamma_3=0$. 
Thus, $\eta_1,\eta_2, \eta_3$ are linearly independent. 

Next, we prove that the complex dimension of $\widehat{H}(w)$ is $3$. 
If, $\dim_\mathbb{C}\widehat{H}(w)>3$, then $\dim_\mathbb{R}\widehat{H}(w)\geqslant 8$. Since there are only $6$ simultaneous equations from the period condition, the dimension of $H(w)$ is $2$ or more. By Corollary \ref{Thm-$Nul$}, $\mathrm{Nul}(tw)$ is $5$ or more for all $t$. 
On the other hand, 
by a result of Nayatani \cite[p518, THEOREM 2]{Nayatani}, 
to be reviewed below as Theorem \ref{N-d},   
the nullity of $tw$ is $3$ when $t$ is sufficiently small, which is a contradiction. Therefore, $\dim_\mathbb{C}\widehat{H}(w)=3$. 
\end{proof}

\begin{Theorem}[Nayatani \cite{Nayatani}]\label{N-d}\rm
Let $g\colon M\to \widehat{\mathbb{C}}$ be a nonconstant meromorphic function of degree $d$.  Let $\nu$ be the number of distinct poles of $g$. Then the following estimates hold for all sufficiently small $t\colon  $
$$\mathrm{Ind}(tg)\geqslant 2d-\nu, \quad
\mathrm{Ind}(tg)+\mathrm{Nul}(tg)\leqslant 2d+\nu+1, \quad
\mathrm{Nul}(tg) \leqslant2\nu+1.   
$$
In particular, if $\nu = 1$, then we have
$$\mathrm{Ind}(tg)=2d-1\quad\mbox{and}\quad\mathrm{NuI}(tg)=3$$
for all sufficiently small $t$.
\end{Theorem}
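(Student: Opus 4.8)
The plan is to exploit the conformal invariance established above and to analyse the degeneration of the metrics $ds^{2}_{tg}=\dfrac{4t^{2}|g'|^{2}}{(1+t^{2}|g|^{2})^{2}}\,d\zeta\,d\bar\zeta$ as $t\downarrow 0$. By Lemma \ref{def-Ind}, $\mathrm{Ind}(tg)$ and $\mathrm{Nul}(tg)$ count the eigenvalues of $-\Delta_{tg}$ that are $<2$, resp.\ $=2$. On any compact set disjoint from the poles of $g$ one has $1+t^{2}|g|^{2}\to1$, so there $ds^{2}_{tg}$ equals, up to a bounded factor, $t^{2}$ times the fixed metric $4|g'|^{2}\,d\zeta\,d\bar\zeta$; hence on this "bulk" region the spectrum of $-\Delta_{tg}$ grows like $t^{-2}$ and contributes nothing $\le 2$. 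Near a pole $p_{j}$ of order $m_{j}$, writing $g(\zeta)=c_{j}\zeta^{-m_{j}}(1+O(\zeta))$ in a centred coordinate and rescaling $\zeta=(t|c_{j}|)^{1/m_{j}}s$, the metric $ds^{2}_{tg}$ converges on compacta to $ds^{2}_{h_{j}}$, the pull-back of the round metric by the degree-$m_{j}$ map $h_{j}(s)=(c_{j}/|c_{j}|)s^{-m_{j}}$ on $\widehat{\mathbb C}$, which is M\"obius-equivalent to $s\mapsto s^{m_{j}}$. Thus, in a suitable spectral sense, $(M,ds^{2}_{tg})$ degenerates to the union of the $\nu$ "bubbles" $(\widehat{\mathbb C},ds^{2}_{h_{j}})$ all attached at a single point (the collapsed bulk), with $\sum_{j}m_{j}=d$. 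An explicit separation-of-variables (P\"oschl--Teller) computation gives for the model $\mathrm{Ind}(s^{m})=2m-1$ and $\mathrm{Nul}(s^{m})=3$.

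The estimate $\mathrm{Ind}(tg)\ge 2d-\nu$ follows by transplanting bubble data. For each $j$ choose functions spanning a $(2m_{j}-1)$-dimensional subspace of $C^{\infty}(\widehat{\mathbb C})$ on which the model quadratic form is negative definite, cut them off to a small coordinate disc about $p_{j}$, and pull them back to $M$. Their supports are pairwise disjoint, the metric-independent form $Q$ (Remark \ref{invariance}) restricted to one transplanted bubble converges to the model form, and the cut-off errors and the bulk contribution are $o(1)$; so $Q$ is negative definite on the direct sum, of dimension $\sum_{j}(2m_{j}-1)=2d-\nu$, once $t$ is small.

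For the upper bounds I would follow the eigenvalues $\le 2$ through the degeneration. If $(-\Delta_{tg})u=\mu u$ with $\mu\le 2+o(1)$, then after rescaling $u$ subconverges on each bubble to an eigenfunction of $-\Delta_{h_{j}}$ with eigenvalue $\le 2$; since $\mathrm{Nul}(h_{j})=3$, the eigenvalue-$2$ space of $-\Delta_{h_{j}}$ is exactly $N(h_{j})=\{a_{j}\cdot G_{h_{j}}:a_{j}\in\mathbb R^{3}\}$, while on the collapsing bulk $u$ converges to a bounded harmonic function, hence to a constant $c$ (Liouville on the parabolic cone ends). Matching at the collapsed point, where $G_{h_{j}}=(0,0,-1)$ is the image of the zero of $h_{j}$, forces $-(a_{j})_{3}=c$ for every $j$, i.e.\ $\nu-1$ independent linear relations among the $3\nu$ parameters $a_{1},\dots,a_{\nu}$; so the limiting "eigenvalue-$2$ part" of the spectrum has dimension at most $2\nu+1$, and by upper semicontinuity $\mathrm{Nul}(tg)\le 2\nu+1$ for small $t$ — equivalently $\dim_{\mathbb R}H(tg)\le 2\nu-2$ via Corollary \ref{Thm-$Nul$}. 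The same bookkeeping bounds the number of eigenvalues $\le 2$ of $-\Delta_{tg}$ by $\bigl(\sum_{j}(2m_{j}-1)\bigr)+(2\nu+1)=2d+\nu+1$, hence $\mathrm{Ind}(tg)+\mathrm{Nul}(tg)\le 2d+\nu+1$. When $\nu=1$ these combine with $\mathrm{Nul}\ge 3$ to pin down $\mathrm{Ind}(tg)=2d-1$ and $\mathrm{Nul}(tg)=3$.

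The main obstacle is making this spectral degeneration rigorous and sharp: one must rule out any eigenvalue below $2$ escaping the bubbles (uniform control on the transition annuli and on $-\Delta_{tg}$ over the collapsing bulk), establish subconvergence of rescaled eigenfunctions to model eigenfunctions, and show that the matching at the collapsed point is exact, so that the naive bubble total $\sum_{j}(2m_{j}+2)=2d+2\nu$ of eigenvalues $\le 2$ drops by precisely $\nu-1$ and no more; the transplant lower bound and the model identities $\mathrm{Ind}(s^{m})=2m-1$, $\mathrm{Nul}(s^{m})=3$ are comparatively routine. A more algebraic route to the upper bounds, avoiding part of this analysis, is to stay inside the fixed space $\widehat H(g)=\{\omega\in H^{0}(K(M)+\widetilde B(g)):\mathrm{Res}_{p_{i}}\omega=0\}$ — whose dimension is given by Riemann--Roch and the residue conditions — and to bound from below, uniformly for small $t>0$, the rank of the real period conditions $\mathrm{Re}\int_{\alpha}((1-t^{2}g^{2}),i(1+t^{2}g^{2}),2tg)\,\omega=0$ defining $H(tg)$ (which split into $\mathrm{Re}\int_{\alpha}g\omega=0$ and $\int_{\alpha}\omega=t^{2}\,\overline{\int_{\alpha}g^{2}\omega}$), showing it is at least $\dim_{\mathbb R}\widehat H(g)-(2\nu-2)$.
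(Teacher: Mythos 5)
First, note that the paper does not prove this statement at all: it is quoted verbatim as Nayatani's result (\cite{Nayatani}, p.~518, Theorem~2) and used as a black box in the proof of Lemma \ref{widehatH}. So there is no internal proof to compare with; what can be judged is whether your sketch would actually establish Nayatani's estimates. Your overall strategy --- localization of the degenerating metrics $ds^2_{tg}$ at the poles, the model computation $\mathrm{Ind}(s^m)=2m-1$, $\mathrm{Nul}(s^m)=3$, and transplanting model test functions to get $\mathrm{Ind}(tg)\geqslant\sum_j(2m_j-1)=2d-\nu$ --- is the right (and essentially Nayatani's) route, and the lower bound part of your sketch is sound.

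The genuine gap is in the two upper bounds, which is exactly where the strength of the theorem lies. A soft bubbling/spectral-convergence argument of the kind you describe gives at best $\mathrm{Ind}(tg)+\mathrm{Nul}(tg)\leqslant 2d+2\nu$ and $\mathrm{Nul}(tg)\leqslant 3\nu$: the improvement to $2d+\nu+1$ and $2\nu+1$ rests entirely on your ``matching at the collapsed point'' producing $\nu-1$ independent linear relations, and the justification you offer (continuity of the limit eigenfunction at the single junction point) does not work as stated. The necks joining the bubbles to the collapsing bulk are conformal annuli whose modulus tends to infinity, i.e.\ their capacity tends to $0$; in two dimensions a point has zero capacity, so $H^1$-limits along such a degeneration impose \emph{no} value-matching constraint at the junction, and the naive limit problem is just the disjoint union of the $\nu$ model spheres. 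Hence the claimed relations $-(a_j)_3=c$ need a genuinely quantitative argument on the transition annuli (or a different mechanism altogether, as in Nayatani's actual proof), and without it the counts $2d+\nu+1$ and $2\nu+1$ are not obtained for $\nu\geqslant 2$. Two mitigating remarks: (i) for the case actually needed in this paper, $g=w$ on $M_a$ has a single pole, $\nu=1$, and there the crude bounds $\mathrm{Ind}+\mathrm{Nul}\leqslant 2d+2$, $\mathrm{Nul}\leqslant 3$, together with $\mathrm{Nul}\geqslant 3$ (Proposition \ref{def-Nul 3}) and $\mathrm{Ind}\geqslant 2d-1$, already force $\mathrm{Ind}(tg)=2d-1$ and $\mathrm{Nul}(tg)=3$, so your sketch, once the routine bracketing over bulk and necks is done, would suffice for the application here; (ii) your alternative ``algebraic'' route via $\widehat H(g)$ and the rank of the period conditions can only control $\mathrm{Nul}(tg)$ through Corollary \ref{Thm-$Nul$}, not the sum $\mathrm{Ind}(tg)+\mathrm{Nul}(tg)$, so it cannot replace the missing analysis for the second estimate.
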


In the proof of the next lemma, we use the perfect elliptic integrals. 
So we recall the definition of these integrals.

\begin{Definition}
\begin{align*}
K(k)=\int_{0}^{\frac{\pi}{2}}\frac{d\theta}{\sqrt{1-k^2\sin^2\theta}} 
\end{align*}
is called the {\em perfect elliptic integral of the first kind} and
\begin{align*}
E(k)=\int_{0}^{\frac{\pi}{2}}\sqrt{1-k^2\sin^2\theta}d\theta
\end{align*}
is called the {\em perfect elliptic integral of the second kind}. 
\end{Definition}

\begin{Lemma}\rm\label{dimH}
For each $a$ in the range  $1\leqslant a\leqslant a_0$ (where $a_0$ can be numerically evaluated) there are  positive real numbers $t_1(a), t_2(a)$ ($t_1(a) < t_2(a)$)  such that 
\begin{align*}
\dim_ \mathbb{R}H(tw)=
\begin{cases}
1, & t=t_1(a), t_2(a),\\
0, &  t>0, t\neq t_1(a), t_2(a). 
\end{cases}
\end{align*}
Therefore, 
\begin{align*}
\mathrm{Nul}(tw)=
\begin{cases}
4, & t=t_1(a), t_2(a), \\
3, &  t\neq t_1(a), t_2(a). 
\end{cases}
\end{align*}
\end{Lemma}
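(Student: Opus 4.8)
The plan is to parametrize the elements of $\widehat{H}(w)$ by a vector $(\gamma_1,\gamma_2,\gamma_3)\in\mathbb{C}^3$ via $\omega=\gamma_1\eta_1+\gamma_2\eta_2+\gamma_3\eta_3$, and then to impose the period condition that defines $H(tw)$ inside $\widehat{H}(w)=\widehat{H}(tw)$. Recall that for the deformation $tw$ the period condition reads $\mathrm{Re}\int_\alpha((1-t^2w^2),\,i(1+t^2w^2),\,2tw)\,\omega=0$ for every closed curve $\alpha$, so it suffices to check this on a basis $\alpha_1,\alpha_2$ of $H_1(M_a,\mathbb{Z})$. Thus $H(tw)$ is the real kernel of a real-linear map $\mathbb{C}^3\to\mathbb{R}^6$ whose matrix entries are the real and imaginary parts of the six period integrals $\int_{\alpha_j}\eta_k$, $\int_{\alpha_j}w^2\eta_k$, $\int_{\alpha_j}w\eta_k$. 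The first step is therefore to reduce these eighteen integrals to manageable closed forms. Here one exploits the hyperelliptic symmetry: the involution $(z,w)\mapsto(z,-w)$ acts on $H_1$, and splits differentials into those pulled back from $\widehat{\mathbb{C}}$ and those anti-invariant; this kills many of the integrals outright and forces the surviving ones to be expressible through the real periods of $dz/w$, $z\,dz/w$ and related abelian differentials, i.e. through $K(k)$ and $E(k)$ for an appropriate modulus $k=k(a)$.

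The second step is to extract, from the $6\times 6$ real system, the genuinely independent conditions. By the symmetry reduction the system should decouple into (i) conditions that are automatically satisfied for all $t$ (these correspond to the $3$-dimensional space $L(tw)$ already accounted for in $\mathrm{Nul}(tw)=3+\dim H(tw)$, or more precisely to components of $\omega$ that can never contribute a period obstruction), and (ii) a residual small system — I expect effectively two real equations in the remaining real parameters — whose coefficients depend on $a$ and $t$. Writing $s=t^2$, these coefficients will be affine in $s$ with coefficients built from $K(k(a)),E(k(a))$ and rational functions of $a$, because $w^2$ contributes the $t^2$ term and the linear-in-$w$ terms integrate to zero by parity over the symmetric cycles. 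The condition $\dim_\mathbb{R}H(tw)\geqslant 1$ then becomes the vanishing of a determinant $\Phi(a,s)=0$ which, after clearing denominators, is a quadratic polynomial in $s$ with coefficients that are explicit functions of $a$ (involving $K$, $E$).

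The third step is the analysis of $\Phi(a,s)=0$ as an equation in $s>0$. One shows: (a) for each fixed $a$ in the stated range the quadratic $\Phi(a,\cdot)$ has exactly two positive roots $s_1(a)<s_2(a)$, giving $t_i(a)=\sqrt{s_i(a)}$; (b) at each such root the kernel is exactly one-dimensional (not two), which follows once we know $\dim_\mathbb{R}H(tw)\leqslant 1$ for all $t>0$ — and indeed $\dim_\mathbb{C}\widehat{H}(w)=3$ together with the six period equations cannot be used for a cheap bound, so instead one argues that a two-dimensional $H(tw)$ would give $\mathrm{Nul}(tw)\geqslant 5$, contradicting the behaviour $\mathrm{Nul}(tw)=3$ for small $t$ from Theorem \ref{N-d} by a continuity/monotonicity argument on the relevant sub-determinants, or by checking that the two residual equations are actually independent whenever one of them holds. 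The positivity and root-counting in (a) is where the constant $a_0$ enters: one verifies the sign pattern of the leading coefficient, the constant term, and the discriminant of $\Phi(a,\cdot)$ numerically/analytically for $1\leqslant a\leqslant a_0$, and $a_0$ is simply the first value of $a$ at which one of these sign conditions fails (e.g. the discriminant hits zero or a root leaves the positive axis). The conclusion $\mathrm{Nul}(tw)=4$ at $t=t_1(a),t_2(a)$ and $3$ otherwise is then immediate from $\mathrm{Nul}(tw)=3+\dim_\mathbb{R}H(tw)$ (Corollary \ref{Thm-$Nul$}).

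The main obstacle I anticipate is twofold: first, carrying out the period reduction cleanly enough that the $6\times 6$ system transparently collapses to a single quadratic $\Phi(a,s)$ — this requires a careful choice of homology basis adapted to the two slits and bookkeeping of which integrals vanish by the hyperelliptic involution, and getting the elliptic-integral identities (Legendre relation, and the expressions of $\int z^2\,dz/w$ etc.\ in terms of $K,E$) exactly right. Second, and more delicate, is pinning down $\dim_\mathbb{R}H(tw)\leqslant 1$ uniformly and proving that $\Phi(a,\cdot)$ has precisely two positive roots for $a\le a_0$; the latter is an inequality among transcendental functions of $a$, so the honest statement is the numerically-certified range $1\leqslant a\leqslant a_0$, and the proof there is a monotonicity/sign analysis rather than a closed-form argument. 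Everything else — membership in $\widehat{H}$, linear independence, the residue vanishing — is routine and already essentially done in Lemma \ref{widehatH}.
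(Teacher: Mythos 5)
Your overall plan coincides with the paper's: expand $\eta=\gamma_1\eta_1+\gamma_2\eta_2+\gamma_3\eta_3$ in the basis of $\widehat H(w)$, impose the period condition only on two cycles $\alpha_1,\alpha_2$, evaluate the periods through elliptic integrals $K,E$ (with imaginary modulus) after subtracting exact differentials, and end with determinant conditions polynomial in $s=t^2$ whose positive roots give $t_1(a),t_2(a)$, with the range $1\leqslant a\leqslant a_0$ certified numerically; the nullity statement then follows from $\mathrm{Nul}(tw)=3+\dim_\mathbb{R}H(tw)$. However, two of your structural predictions are wrong at exactly the step that produces the conclusion. First, the $w$-periods do not all vanish by parity: $\int_{\alpha_1}w\eta_3=3\sqrt a\,iK(ia)$ and $\int_{\alpha_2}w\eta_3=\frac{3}{\sqrt a}K\!\left(\frac{i}{a}\right)$ are nonzero, and it is precisely these nonvanishing periods that force $\gamma_3=0$; there is no block of conditions ``automatically satisfied for all $t$'' corresponding to $L(tw)$, since $L(tw)$ never lies in $H(tw)$ to begin with. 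Second, after $\gamma_3=0$ one has four real unknowns $(x_1,x_2,y_1,y_2)$ and four real equations, and because the $\alpha_1$-periods of $\eta_i$ and $w^2\eta_i$ are purely imaginary while the $\alpha_2$-periods are real, the system block-diagonalizes into two $2\times2$ real systems, with matrices $U$ (acting on $(x_1,x_2)$) and $V$ (acting on $(y_1,y_2)$), each entry affine in $t^2$. Hence $\dim_\mathbb{R}H(tw)\geqslant1$ is equivalent to $\det U=0$ \emph{or} $\det V=0$: a product of two quadratics in $s$ (a quartic), not a single quadratic $\Phi(a,s)$ with two positive roots. In the paper each quadratic has exactly one positive root, $t_1(a)^2$ from $\det U$ and $t_2(a)^2$ from $\det V$, and $t_1(a)<t_2(a)$ is what Mathematica checks for $a\leqslant a_0$.

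This block structure is also what delivers your step (b), which as you state it is genuinely gapped: the one-dimensionality of $H(t_i(a)w)$ does not follow from a continuity or monotonicity argument starting from $\mathrm{Nul}(tw)=3$ at small $t$ (the small-$t$ behaviour controls nothing at the roots), nor from a vague independence claim. It follows because at $t=t_1(a)$ one has $\det U=0$ but $\det V\neq0$, so $y_1=y_2=0$ and the kernel is the one-dimensional real span of $a_1\eta_1+a_2\eta_2$, and symmetrically at $t=t_2(a)$ (kernel spanned by $i(b_1\eta_1+b_2\eta_2)$); the nondegeneracy $\det V\neq0$ at $t_1(a)$ and $\det U\neq0$ at $t_2(a)$ is part of what is verified numerically on $1\leqslant a\leqslant a_0$. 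So the method is the right one, but the decisive bookkeeping — the split into real and imaginary parts yielding $U$ and $V$, the role of the nonzero $w\eta_3$ periods, and the replacement of your single quadratic by two quadratics each with one positive root — is missing, and the reduction as you propose it would fail.
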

\begin{proof}
Let $\alpha_1,\alpha_2$ be
\begin{align*}
&\alpha_1=\left\{\left(a-\frac{1}{4a}\right)+(a+\frac{1}{4a})e^{i\theta}\in \mathbb{\widehat{C}}_1 \mid 0 \leqslant \theta \leqslant\pi\right\}\notag\\
&\hspace{40mm}\cup \left\{\left(a-\frac{1}{4a}\right)+\left(a+\frac{1}{4a}\right)e^{i\theta}\in \mathbb{\widehat{C}}_2 \mid \pi \leqslant \theta \leqslant 2\pi\right\}, \\
&\alpha_2=\left\{\left(-\frac{1}{a}+\frac{a}{4}\right)+\left(\frac{1}{a}+\frac{a}{4}\right)e^{i\theta}\in \mathbb{\widehat{C}}_1 \mid 0 \leqslant \theta \leqslant 2\pi\right\}. 
\end{align*}
We take $\eta \in \widehat{H}(w)$ and express it as  
$$\eta=\gamma_1\eta_1+\gamma_2\eta_2+\gamma_3\eta_3, \,\,\gamma_j=x_j+iy_j,\,\,x_j,y_j\in\mathbb{R},\,\, j=1, 2 , 3. $$
We will find the simultaneous equation that 
$\gamma_1,\gamma_2,\gamma_3$
must satisfy so that the condition
$$\mathrm{Re}\int_{\alpha}(1-(tw)^2, i(1+(tw)^2), 2tw)\eta =0\,\, \mbox{for} \,\,\forall \alpha \,\,\mbox{closed curve in}\,\, M_a$$
holds for this $\eta.$ 

To do this, we first compute 
the integrals of $\eta_i, w\eta_i, w^2\eta_i,\,\,i=1,2,3, $ on $\alpha_1,\alpha_2$. When we actually compute, we change $\alpha_1,\alpha_2$ and we compute along the closed intervals $[0,a],[-\frac{1}{a},0]$ on the real axis, respectively. Since  the denominator of $\eta_i, \,\,i=1,2,3,$ has $z-A_1, z-A_2$ and these integrals diverge at $z=A_1,A_2, $ we subtract from $\eta_i$ the differentials of  meromorphic functions $f_i$ with poles at most order one at $(z,w)=(A_1,B_1),(A_2,B_2)$ on the Riemann surface $M_a$  so that the integrals of the meromorphic differentials $\eta_i-df_i$ converge. 

When we consider $\eta_1$, we obtain 
\begin{align*}
&\eta_1-\frac{1}{2B_1^2}d\frac{w}{(z-A_1)(z-A_2)}\\
=&\frac{z-\frac{1}{2}(A_1+A_2)}{(z-A_1)^2(z-A_2)^2}\frac{dz}{w}-\frac{1}{2B_1^2}\left(\frac{dw}{(z-A_1)(z-A_2)}-\frac{(z-A_1)w+(z-A_2)w}{(z-A_1)^2(z-A_2)^2}dz\right)\\
=&-\frac{3}{4B_1^2}\frac{dz}{w}+\frac{1}{B_1^2}\frac{(z-\frac{1}{2}(A_1+A_2))(w^2+B_1^2)}{(z-A_1)^2(z-A_2)^2}\frac{dz}{w}\\
=&-\frac{3}{4B_1^2}\frac{dz}{w}+\frac{1}{B_1^2}\frac{(z-\frac{1}{2}(A_1+A_2))(z+2A_1-(a-\frac{1}{a}))}{(z-A_2)^2w}dz\\
=&\frac{1}{4B_1^2}\frac{dz}{w}-\frac{(A_1-A_2)^2}{4B_1^2(z-A_2)^2w}dz.  
\end{align*} 
Therefore, by $\int_{\alpha_1}d\frac{w}{(z-A_1)(z-A_2)}=0, $
\noindent
\begin{eqnarray}\label{eqnarray-eta}
&&\int_{\alpha_1}\eta_1\notag\\
&=&\frac{1}{4B_1^2}\int_{\alpha_1}\frac{dz}{w}-\frac{(A_1-A_2)^2}{4B_1^2}\int_{\alpha_1}\frac{1}{(z-A_2)^2w}dz \notag\\
&=&\frac{i}{2B_1^2}\int_{0}^{a}\frac{dt}{\sqrt{t(a-t)(\frac{1}{a}+t)}}-\frac{i(A_1-A_2)^2}{2B_1^2}\int_{0}^{a}\frac{dt}{(t-A_2)^2\sqrt{t(a-t)(\frac{1}{a}+t)}}.  
\end{eqnarray}

We compute the rightmost side of \eqref{eqnarray-eta}. By the definition of the perfect elliptic integral, 
\begin{eqnarray}\label{eqnarray-E}
E(ia)=\int_{0}^{\frac{\pi}{2}}\sqrt{1+a^2\sin^2\theta}d\theta, 
\end{eqnarray}
\begin{eqnarray}\label{eqnarray-K}
K(ia)=\int_{0}^{\frac{\pi}{2}}\frac{d\theta}{\sqrt{1+a^2\sin^2\theta}}.  
\end{eqnarray}
Let $a\sin^2\theta=t$. Then $d\theta=\frac{dt}{2\sqrt{t(a-t)}}$, and \eqref{eqnarray-E}, \eqref{eqnarray-K} becomes 
\begin{eqnarray}\label{eqnarray-t}
E(ia)=\int_{0}^{a}\frac{\sqrt{1+at}}{2\sqrt{t(a-t)}}dt,  
\end{eqnarray}
\begin{eqnarray}\label{eqnarray-tt}
K(ia)=\int_{0}^{a}\frac{dt}{2\sqrt{t(a-t)(1+at)}}. 
\end{eqnarray}
 Also, 
\begin{eqnarray}\label{eqnarray-E-K}
E(ia)-K(ia)=\int_{0}^{a}\frac{at}{2\sqrt{t(a-t)(1+at)}}dt
\end{eqnarray}
holds. By \eqref{eqnarray-tt}, the first definite integral of the rightmost side of \eqref{eqnarray-eta} is 
\begin{align}\label{right1}
\int_{0}^{a}\frac{dt}{\sqrt{t(a-t)(\frac{1}{a}+t)}}=2\sqrt{a}K(ia). 
\end{align}

Next, we compute the second definite integral of the rightmost side of \eqref{eqnarray-eta}. If 
we set 
$$\varphi(t)=t^3+\left(\frac{1}{a}-a\right)t^2-t, \,\,
I[m]=\int_{0}^{a}\frac{t^mdt}{\sqrt{\varphi(t)}}, \,\,
J[m]=\int_{0}^{a}\frac{dt}{(t-A_2)^m\sqrt{\varphi(t)}}, $$
the recurrence formula 
\begin{align*}
&2m\varphi(A_2)J[m+1]+(2m-1)\varphi^\prime(A_2)J[m]+(m-1)\varphi^{\prime\prime}(A_2)J[m-1]\notag\\
&\hspace{30mm}+\frac{2m-3}{6}\varphi^{\prime\prime\prime}(A_2)J[m-2]+\frac{m-2}{12}\varphi^{\prime\prime\prime\prime}(A_2)J[m-3]=0
\end{align*}
holds (see \cite{J}). Since 
\begin{align*}
\varphi(A_2)=B_2^2, \quad \varphi^{\prime}(A_2)=0,\quad \varphi^{\prime\prime\prime}(A_2)=6,\quad\varphi^{\prime\prime\prime\prime}(A_2)=0, 
\end{align*}
this recurrence formula becomes 
\begin{align}\label{m}
&2mB_2^2J[m+1]+(2m-3)J[m-2]=0. 
\end{align}
Substituting $m=1$ into \eqref{m}, we obtain the following equation 
\begin{align}\label{o1}
2B_2^2J[2]=J[-1]. 
\end{align}
On the other hand, 
\begin{align}\label{J-1}
J[-1]=I[1]-A_2I[0]. 
\end{align}
By \eqref{eqnarray-t} and \eqref{eqnarray-E-K},  we obtain 
\begin{align}
&I[0]=\int_{0}^{a}\frac{dt}{\sqrt{t(a-t)(\frac{1}{a}+t)}}=2\sqrt{a}K(ia), \label{I0}\\
&I[1]=\int_{0}^{a}\frac{t}{\sqrt{t(a-t)(\frac{1}{a}+t)}}dt=\frac{2(E(ia)-K(ia))}{\sqrt{a}}\label{I1}.  
\end{align}
Substituting \eqref{I0} and \eqref{I1} into \eqref{J-1}, we obtain the following equation 
\begin{align}\label{o2}
J[-1]=2\frac{E(ia)-K(ia)-aA_2K(ia)}{\sqrt{a}}. 
\end{align}
Since the second definite integral of the rightmost side of \eqref{eqnarray-eta} is $J[2]$, 
\begin{align*}
\int_{0}^{a}\frac{dt}{(t-A_2)^2\sqrt{t(a-t)(\frac{1}{a}+t)}}=\frac{E(ia)-K(ia)-aA_2K(ia)}{\sqrt{a}B_2^2} 
\end{align*}
by \eqref{o1} and \eqref{o2}. 
Using this, the integral of $\eta_1$ on $\alpha_1$ is obtained as  
\begin{align*}
\int_{0}^{a}\eta_1=\frac{\sqrt{a}}{B_1^2}iK(ia)-\frac{(A_1-A_2)^2}{2B_1^2}\frac{\frac{1}{\sqrt{a}}(E(ia)-K(ia))-\sqrt{a}A_2K(ia)}{B_2^2}i. 
\end{align*}

Other integrals can be computed similarly (see Appendix). The results are as follows.  
\begin{align*}
&\int_{\alpha_1}\eta_1=\frac{\sqrt{a}}{B_1^2}iK(ia)-\frac{(A_1-A_2)^2}{2B_1^2}iI_2(a),\quad \int_{\alpha_1}w\eta_1=0,\quad \int_{\alpha_1}w^2\eta_1=3\sqrt{a}iK(ia), \\
&\int_{\alpha_1}\eta_2=\frac{2A_1\sqrt{a}}{B_1^2}iK(ia)+\frac{A_1+A_2}{3B_1^2}iI_2(a),\quad \int_{\alpha_1}w\eta_2=0, \quad\int_{\alpha_1}w^2\eta_2=\frac{6}{\sqrt{a}}iI_1(a), \\
&\int_{\alpha_1}\eta_3=0,\quad \int_{\alpha_1}w\eta_3=3\sqrt{a}iK(ia), \quad \int_{\alpha_1}w^2\eta_3=0, 
\end{align*}
\begin{align*}
&\int_{\alpha_2}\eta_1=-\frac{1}{\sqrt{a}B_2^2}K\left(\frac{i}{a}\right)+\frac{(A_1-A_2)^2}{2B_2^2}J_2(a),\quad \int_{\alpha_2}w\eta_1=0, \quad \int_{\alpha_2}w^2\eta_1=\frac{3}{\sqrt{a}}K\left(\frac{i}{a}\right), \\
&\int_{\alpha_2}\eta_2=-\frac{2A_2}{\sqrt{a}B_2^2}K\left(\frac{i}{a}\right)-\frac{A_1+A_2}{3B_2^2}J_2(a),\quad \int_{\alpha_2}w\eta_2=0, \quad \int_{\alpha_2}w^2\eta_2=-6\sqrt{a}J_1(a),  \\
&\int_{\alpha_2}\eta_3=0,\quad \int_{\alpha_2}w\eta_3=\frac{3}{\sqrt{a}}K\left(\frac{i}{a}\right), \quad \int_{\alpha_2}w^2\eta_3=0. 
\end{align*}
Here, 
\begin{align*}
&I_1(a)=E(ia)-K(ia),\quad I_2(a)=\frac{1}{B_2^2}\left(\frac{1}{\sqrt{a}}I_1(a)-\sqrt{a}A_2K(ia)\right),\\
&J_1(a)=E\left(\frac{i}{a}\right)-K\left(\frac{i}{a}\right),\quad J_2(a)=\frac{1}{B_1^2}\left(\sqrt{a}J_1(a)+\frac{1}{\sqrt{a}}A_1K\left(\frac{i}{a}\right)\right). 
\end{align*}

Now we return to the period condition 
$$
\mathrm{Re}\int_{\alpha}(1-(tw)^2, i(1+(tw)^2), 2tw)\eta =0 
$$
for $\eta = \gamma_1\eta_1+\gamma_2\eta_2+\gamma_3\eta_3\in \widehat{H}(w)$ 
and closed curves $\alpha=\alpha_1, \alpha_2$ on $M_a$.
First we obtain 
\begin{align*}
&\mathrm{Re}\int_{\alpha_1}w\eta =0\,\, \Longleftrightarrow\,\, \mathrm{Im}(\gamma_3)=0, \\
&\mathrm{Re}\int_{\alpha_2}w\eta =0\,\, \Longleftrightarrow\,\, \mathrm{Re}(\gamma_3)=0. 
\end{align*}
Therefore, $\gamma_3=0. $

Next, we have  
\begin{eqnarray}\label{eqnarray-a}
&&\mathrm{Re}\int_{\alpha}(1-w^2)\eta= \mathrm{Re}\int_{\alpha}i(1+w^2)\eta=0\nonumber\\
&\Longleftrightarrow&\int_{\alpha}\eta=\overline{\int_{\alpha}w^2\eta}. 
\end{eqnarray}
When we compute the left and right sides of \eqref{eqnarray-a} on $\alpha_1,\alpha_2$, we obtain 
\begin{eqnarray*}\label{eqnarray-b}
&&\int_{\alpha_1}\eta\\
&=&\gamma_1\left(\frac{\sqrt{a}}{B_1^2}iK(ia)-\frac{(A_1-A_2)^2}{2B_1^2}iI_2(a)\right)+\gamma_2\left(\frac{2A_1\sqrt{a}}{B_1^2}iK(ia)+\frac{A_1+A_2}{3B_1^2}iI_2(a)\right), \\
&&\overline{\int_{\alpha_1}w^2\eta}
=-3t^2\overline{\gamma_1}\sqrt{a}iK(ia)-\frac{6}{\sqrt{a}}t^2\overline{\gamma_2}iI_1(a), 
\end{eqnarray*} 
\begin{eqnarray*}
&&\int_{\alpha_2}\eta\\
&=&\gamma_1\left(\frac{-1}{\sqrt{a}B_2^2}K\left(\frac{i}{a}\right)+\frac{(A_1-A_2)^2}{2B_2^2}J_2(a)\right)+\gamma_2\left(\frac{-2A_2}{\sqrt{a}B_2^2}K\left(\frac{i}{a}\right)-\frac{A_1+A_2}{3B_2^2}J_2(a)\right), \\
&&\overline{\int_{\alpha_2}w^2\eta}=2t^2\overline{\gamma_1}\frac{3}{2\sqrt{a}}K\left(\frac{i}{a}\right)-6\sqrt{a}t^2\overline{\gamma_2}J_1(a).
\end{eqnarray*} 
Therefore, \eqref{eqnarray-a} with $\alpha=\alpha_1, \alpha_2$ becomes 
\begin{equation}\label{equation-t^2}
\left\{
\begin{aligned}
&\gamma_1(\frac{\sqrt{a}}{B_1^2}K(ia)-\frac{(A_1-A_2)^2}{2B_1^2}I_2(a))+\gamma_2(\frac{2A_1\sqrt{a}}{B_1^2}K(ia)+\frac{A_1+A_2}{3B_1^2}I_2(a))\\
& \hspace{70mm} =-3t^2\overline{\gamma_1}\sqrt{a}K(ia)-\frac{6t^2}{\sqrt{a}}\overline{\gamma_2}I_1(a), \\
&\gamma_1(-\frac{1}{\sqrt{a}B_2^2}K(\frac{i}{a})+\frac{2(A_1-A_2)^2}{4B_2^2}J_2(a))+\gamma_2(-\frac{2A_2}{\sqrt{a}B_2^2}K(\frac{i}{a})-\frac{A_1+A_2}{3B_2^2}J_2(a))\\
& \hspace{70mm} =t^2\overline{\gamma_1}\frac{3}{\sqrt{a}}K(\frac{i}{a})-6\sqrt{a}t^2\overline{\gamma_2}J_1(a). 
\end{aligned}
\right.
\end{equation} 
Let
\begin{eqnarray*}
  U = \left(
    \begin{array}{cc}
    \frac{1}{B_1^2}K(ia)-\frac{(A_1-A_2)^2}{2\sqrt{a}B_1^2}I_2(a) +3t^2K(ia)&\frac{2A_1}{B_1^2}K(ia)+\frac{A_1+A_2}{3B_1^2\sqrt{a}}I_2(a)+\frac{6t^2}{a}I_1(a)\\
    \frac{1}{B_2^2}K(\frac{i}{a})-\frac{\sqrt{a}(A_1-A_2)^2}{2B_2^2}J_2(a)+3t^2K\left(\frac{i}{a}\right)&\frac{2A_2K(\frac{i}{a})}{B_2^2}+\frac{A_1+A_2}{3B_2^2}\sqrt{a}J_2(a)-6at^2J_1(a) 
    \end{array}
  \right) 
  \end{eqnarray*}
  and
 \begin{align*}
    V= \left(
    \begin{array}{cc}
    \frac{1}{B_1^2}K(ia)-\frac{(A_1-A_2)^2}{2\sqrt{a}B_1^2}I_2(a) -3t^2K(ia)&\frac{2A_1K(ia)}{B_1^2}+\frac{A_1+A_2}{3\sqrt{a}B_1^2}I_2(a)-\frac{6t^2}{a}I_1(a)\\
    \frac{1}{B_2^2}K(\frac{i}{a})-\frac{\sqrt{a}(A_1-A_2)^2}{2B_2^2}J_2(a)-3t^2K\left(\frac{i}{a}\right)&\frac{2A_2K(\frac{i}{a})}{B_2^2}+\frac{A_1-A_2}{3\sqrt{a}B_2^2}J_2(a)+6at^2J_1(a)  
    \end{array}
  \right). 
 \end{align*}
If we use these matrices, then $\eqref{eqnarray-b}$ becomes  
\begin{equation}\label{equation-c}
\begin{pmatrix}
U & \text{\Large 0}\\
\text{\Large 0} & V
\end{pmatrix} \left(
    \begin{array}{c}
      x_1\\
     x_2 \\
     y_1\\
     y_2\\ 
    \end{array}
  \right)=0,  
\end{equation}
where $\gamma_j=x_j+iy_j,\,\,x_j,y_j\in\mathbb{R},\,\, j=1, 2$.  
$\eqref{equation-c}$ has a nontrivial solution if and only if $\det U=0$ or $\det V=0$. The conditional equation $\det U=0$ is a quadratic equation of $x=t^2$. This has only one positive real solution for each $a$. If we denote the positive square root of this positive real solution by $t_1(a)$, then this is the only positive real $t$ that satisfies $\det U=0$. The conditional equation $\det V=0$ is also a quadratic equation of $x=t^2$. This has only one positive real solution for each $a$. If we denote the positive square root of this positive real solution by $t_2(a)$, then this is the only positive real $t$ that satisfies $\det V=0$. If we draw the graphs of $t_1(a), t_2(a)$ in the range $1\leqslant a\leqslant a_0$ ( where $a_0$ can be numerically evaluated ) by Mathematica, we see that $t_1(a) < t_2(a)$.  When $t=t_1(a)$, $\det U=0$, but  $\det V\neq 0$. Then  there is only one nontrivial solution of $\eqref{equation-c}$ up to real multiple, and this nontrivial solution is written as $(x_1,x_2,y_1,y_2)=(a_1, a_2,0,0)$. When $t=t_2(a)$, $\det V=0$, but $\det U\neq 0$. Then  there is only one nontrivial solution of $\eqref{equation-c}$ up to real multiple, and this nontrivial solution is written as $(x_1,x_2,y_1,y_2)=(0,0,b_1,b_2)$.             
 Therefore, the basis of $H(t_1(a)w)$ is $\eta=a_1\eta_1+a_2\eta_2$ when $t=t_1(a)$ and the basis of $H(t_2(a)w)$ is $\eta=ib_1\eta_1+ib_2\eta_2$ when $t=t_2(a).$ Thus $\dim_\mathbb{R}H(tw)=1$ when $t=t_1(a),t_2(a)$, and $\dim_\mathbb{R}H(tw)=0$ when $t\neq t_1(a),t_2(a)$. Therefore, $\mathrm{Nul}(tw)=4$ when $t=t_1(a),t_2(a)$, and $\mathrm{Nul}(tw)=3$ when $t\neq t_1(a),t_2(a). $
\end{proof}

\begin{Remark}
By using Mathematica, we can check that $t_1(a)<t_2(a)$ for small values of $a$. In fact, as the graphs of Figure 1 suggest, the constant $a_0$ in the statement of Lemma \ref{dimH} is surely larger than 5. On the other hand, the graphs of Figure 2 suggest that the values of $t_1(a)$ and $t_2(a)$ become close to each other rather quickly as the parameter $a$ becomes bigger. 
\end{Remark}

\vspace{1.0cm}
\begin{figure}[h]
\includegraphics[width=10cm]{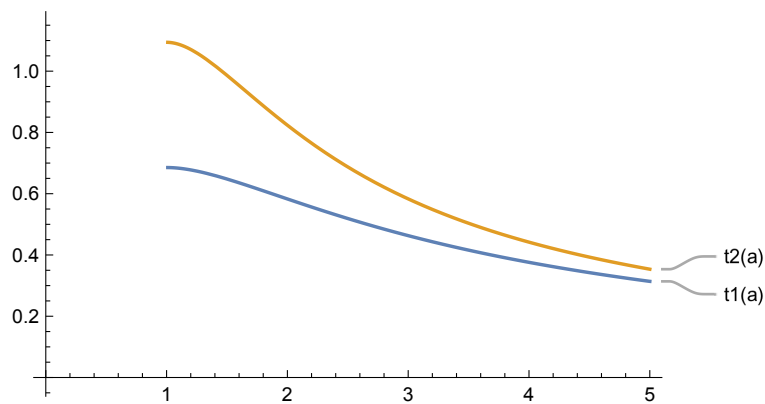}
\caption{Graphs of $t_1$, $t_2$ for $a\leqslant 5$.}
\label{}
\end{figure}
\vspace{1.0cm}

\begin{figure}[h]
\includegraphics[width=10cm]{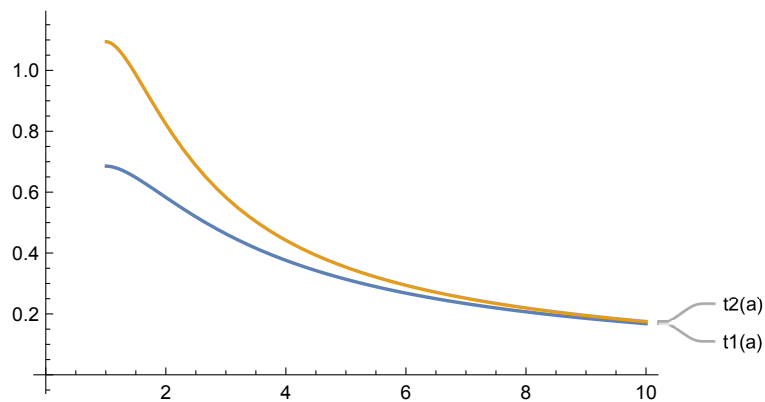}
\caption{Graphs of $t_1$, $t_2$ for $a\leqslant 10$.}
\label{}
\end{figure}

\noindent
\section{\bf Computation of $\mathrm{Ind}(tw)$}

In this section we compute the index of $tg_a$ for all $t$ and $a$ in the range
$1\leqslant a \leqslant a_0$, where $a_0$ is as in Section 4. 

As mentioned in Introduction, Nayatani $\cite{Nayatani}$ computed the index and nullity of 
$t \wp^\prime$, where $\wp$ is the Weierstrass $\wp$-function corresponding to the square lattice $\mathbb{Z}\oplus i\mathbb{Z}$. 
The Riemann surface $\mathbb{C}/\mathbb{Z}\oplus i\mathbb{Z}$ is isomorphic to 
\begin{align*}
M_1=\left\{(z,w)\in\widehat{\mathbb{C}}^2 \mid w^2=z(z^2-1)\right\}
\end{align*}
and $\wp^\prime$ coincides with $w\colon M_1\to \widehat{\mathbb{C}}$ up to a multiplicative positive real constant. 
Since we use Nayatani's result in the proof of Theorem \ref{theorem-d}, we state his result 
in our setting. 

\begin{Theorem}[Nayatani$\cite{Nayatani}$]\label{N-result}\rm
For the meromorphic function $w: M_1\ni (z,w)\longmapsto w\in\widehat{\mathbb{C}}$, 
we have 
\begin{align*}
\mathrm{Ind}(tw)=
\begin{cases}
5, & 0 < t\leqslant t_1(1),t_2(1)\leqslant t, \\
6, &  t_1(1)< t< t_2(1), 
\end{cases}
\end{align*}
and
\begin{align*}
\mathrm{Nul}(tw)=
\begin{cases}
4, &  t=t_1(1),t_2(1), \\
3, &  t\neq t_1(1),t_2(1).  
\end{cases}
\end{align*}
\end{Theorem}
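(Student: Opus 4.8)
The plan is to read off the nullity from the earlier work and then to determine the index by tracking a single eigenvalue branch that dips below $0$ on $(t_1(1),t_2(1))$. The nullity assertion of Theorem~\ref{N-result} is immediate: $a=1$ lies in the admissible range $[1,a_0]$, so Lemma~\ref{dimH} gives $\dim_{\mathbb R}H(tw)=1$ at $t=t_1(1),t_2(1)$ and $0$ otherwise, whence $\mathrm{Nul}(tw)=4$ at the two special values and $3$ elsewhere by Corollary~\ref{Thm-$Nul$}.

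For the index I would work with the metric-free quadratic form $Q_t:=Q$ attached to $g=tw$ (Remark~\ref{invariance}). Fixing a background conformal metric with area form $dA_0=\rho_0\,dx\,dy$ on $M_1$, I write $Q_t(u,u)=\int_{M_1}(|\nabla_0u|^2-V_t\,u^2)\,dA_0$ with energy density $V_t\,dA_0=\frac{4t^2|w'|^2}{(1+t^2|w|^2)^2}\,dx\,dy$; this representation sidesteps the degeneracy of $ds_{tw}^2$ at the ramification points noted before Lemma~\ref{def-Ind}. Let $\mu_1(t)\le\mu_2(t)\le\cdots$ be the eigenvalues of $A_t=-\Delta_0-V_t$ on $L^2(dA_0)$, so that $\mathrm{Ind}(tw)=\#\{j:\mu_j(t)<0\}$ and $\mathrm{Nul}(tw)=\#\{j:\mu_j(t)=0\}$. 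Since $t\mapsto V_t$ is real-analytic and smooth on $M_1$, the branches $\mu_j(t)$ are continuous, so $\mathrm{Ind}(tw)$ can change only where some $\mu_j$ vanishes. The three-dimensional space $L(tw)$ of Proposition~\ref{def-Nul 3} lies in $\ker A_t$ for every $t$, contributing a multiplicity-$3$ zero eigenvalue that never changes sign; hence any index change forces $\dim H(tw)>0$, which by Lemma~\ref{dimH} occurs only at $t=t_1(1),t_2(1)$. Together with the small-$t$ baseline $\mathrm{Ind}(tw)=2\cdot3-1=5$ (Theorem~\ref{N-d}, as $w$ has degree $3$ and a single pole, $\nu=1$), this shows $\mathrm{Ind}(tw)\equiv5$ on $(0,t_1(1))$ and that it is constant on each of $(t_1(1),t_2(1))$ and $(t_2(1),\infty)$, with exactly one further branch $\mu_\ast(t)$ vanishing at each special value.

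The remaining and decisive step is to fix the direction of these two crossings. I would use the first-variation (Hellmann--Feynman) formula $\mu_\ast'(t_\ast)=-\int_{M_1}(\partial_tV_{t_\ast})\,\phi_\ast^2\,dA_0$, where $\phi_\ast$ is the null eigenfunction transverse to $L(t_\ast w)$, chosen within $\ker A_{t_\ast}$ to diagonalise $\partial_tA_{t_\ast}$; by Theorem~\ref{Thm-$H$} one may take $\phi_\ast=\langle X_\eta,G\rangle$ with $\eta$ the explicit generator of $H(t_\ast w)$ from Lemma~\ref{dimH} ($\eta=a_1\eta_1+a_2\eta_2$ at $t_1(1)$ and $\eta=i(b_1\eta_1+b_2\eta_2)$ at $t_2(1)$, with $\eta_1,\eta_2$ as in Lemma~\ref{widehatH}). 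A direct computation gives $\partial_tV_t\,dA_0=\frac{8t|w'|^2(1-t^2|w|^2)}{(1+t^2|w|^2)^3}\,dx\,dy$, whose sign is that of $1-t^2|w|^2$, so $\mu_\ast'(t_\ast)$ is governed by the distribution of $\phi_\ast^2$ between the regions $\{t_\ast|w|<1\}$ and $\{t_\ast|w|>1\}$. Evaluating this weighted integral for the two generators, reducing it (as throughout the paper) to complete elliptic integrals and computing numerically, should give $\mu_\ast'(t_1(1))<0$ and $\mu_\ast'(t_2(1))>0$. Transversality then forces $\mathrm{Ind}(tw)$ to jump $5\to6$ as $t$ increases through $t_1(1)$ and $6\to5$ as it increases through $t_2(1)$; at the endpoints themselves $\mu_\ast=0$ is not counted, so $\mathrm{Ind}(tw)=5$ there, matching the closed inequalities in the statement.

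The hard part is exactly this last sign determination: whereas the nullity is purely algebraic, the direction of each crossing depends on the fine position of the support function $\langle X_\eta,G\rangle$ relative to the moving level set $\{t|w|=1\}$, and proving the two opposite signs rigorously requires controlling the resulting elliptic integrals. As an independent check I would analyse the degeneration $t\to\infty$, where $G_{tw}$ concentrates near the zeros and the pole of $w$ and the associated minimal surface separates into catenoidal and planar pieces, to confirm $\mathrm{Ind}(tw)=5$ for all large $t$; combined with the single transversal crossing at $t_2(1)$ this already pins down $\mathrm{Ind}(tw)=6$ on $(t_1(1),t_2(1))$ without evaluating both signs independently.
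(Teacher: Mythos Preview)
The paper does not prove Theorem~\ref{N-result} at all: it is quoted as a result of Nayatani~\cite{Nayatani} and then used as the base case $a=1$ in the proof of Theorem~\ref{theorem-d}. So there is no ``paper's own proof'' to compare against. What the paper does offer is a summary of Nayatani's original argument in Remark~(i) at the end of Section~5, and your outline follows that summary closely: establish $\mathrm{Ind}(tw)=5$ for small $t$ via Theorem~\ref{N-d}, use continuity of eigenvalue branches so that the index can change only where $\mathrm{Nul}(tw)>3$, and then determine the direction of the crossing at $t_1(1)$ and $t_2(1)$.

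That said, your proposal is not a proof but a programme, and the decisive step is left open. You write that evaluating the Hellmann--Feynman integral ``should give'' $\mu_\ast'(t_1(1))<0$ and $\mu_\ast'(t_2(1))>0$, but you do not carry this out, nor do you indicate why the answer must come out with those signs. This is exactly the step Nayatani had to supply in~\cite{Nayatani}, and it is nontrivial: one needs explicit control of the support function $\langle X_\eta,G\rangle$ against the weight $\partial_t V_t$, and a numerical check alone would not constitute a proof unless accompanied by rigorous error bounds. Your fallback suggestion, to analyse the limit $t\to\infty$ and deduce $\mathrm{Ind}(tw)=5$ there, has the same status: the heuristic of $G_{tw}$ concentrating near zeros and poles is plausible but you give no argument that the spectrum converges in a way that pins down the index, and a priori there could be an even number of additional crossings past $t_2(1)$ that your nullity computation would not detect were it not already known from Lemma~\ref{dimH} that none occur. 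In short, your nullity argument is complete (and non-circular, since Lemma~\ref{dimH} does not rely on Theorem~\ref{N-result}), but for the index you have reproduced the skeleton of Nayatani's proof while leaving its hardest bone unfleshed.
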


\noindent
\begin{Theorem}\rm\label{theorem-d}
If $t_1(a)$ and $t_2(a)$ are as in Lemma \ref{dimH}, then
\begin{align*}
\mathrm{Ind}(tw)=
\begin{cases}
5, & 0 < t\leqslant t_1(a),t_2(a)\leqslant t, \\
6, &  t_1(a)< t< t_2(a) 
\end{cases}
\end{align*}
 for any $a$ in the range $1\leqslant a \leqslant a_0$ ( where $a_0$ can be numerically evaluated ). 
\end{Theorem}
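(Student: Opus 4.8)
The plan is to combine the nullity computation of Lemma \ref{dimH} with a continuity (or "no jump across a regular value") argument, using Nayatani's computation at $a=1$ (Theorem \ref{N-result}) as the base case. First I would recall the general principle that governs how $\mathrm{Ind}(tw)$ can change as the pair $(a,t)$ varies continuously in the region $\{1\leqslant a\leqslant a_0,\ t>0\}$: the eigenvalues of $-\Delta_{tw}$ depend continuously (indeed real-analytically) on the parameters, so by Lemma \ref{def-Ind} the index $\mathrm{Ind}(tw)$, being the number of eigenvalues below $2$, can only change at a parameter value where some eigenvalue crosses the threshold $2$, i.e.\ exactly where $\mathrm{Nul}(tw)>3$. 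Hence on any connected component of the complement of the locus $\{\mathrm{Nul}(tw)>3\}$ the index is constant.

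Next I would identify that locus explicitly. By Lemma \ref{dimH}, for each fixed $a$ in the range $1\leqslant a\leqslant a_0$ the only values of $t>0$ with $\mathrm{Nul}(tw)>3$ are $t=t_1(a)$ and $t=t_2(a)$, at which $\mathrm{Nul}(tw)=4$; moreover $t_1,t_2$ are continuous in $a$ (they are positive square roots of the positive roots of the quadratics $\det U=0$, $\det V=0$, whose coefficients are real-analytic in $a$ via the elliptic integrals), and $t_1(a)<t_2(a)$ throughout the range. Therefore the region $\{1\leqslant a\leqslant a_0,\ t>0\}$ is divided by the two disjoint graphs $t=t_1(a)$, $t=t_2(a)$ into three connected open pieces: $0<t<t_1(a)$, $t_1(a)<t<t_2(a)$, and $t>t_2(a)$ — and on each of these $\mathrm{Ind}(tw)$ is constant. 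So it suffices to evaluate the index at one point of each piece, and Theorem \ref{N-result} does exactly this at $a=1$, giving index $5$ on the first and third pieces and $6$ on the middle piece.

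The remaining step is to pin down the index \emph{on} the two boundary graphs $t=t_i(a)$. Since $\mathrm{Nul}(t_i(a)w)=4$, the eigenvalue $2$ of $-\Delta_{t_iw}$ has multiplicity exactly one more than the generic value $3$; as $(a,t)$ crosses the graph, exactly one eigenvalue passes through $2$, so the index jumps by exactly one between the two adjacent open pieces. Comparing the constant values $5$ and $6$ on the pieces flanking $t=t_1(a)$ (resp.\ $t=t_2(a)$), the index at $t=t_1(a)$ is either $5$ or $6$; to decide it I would argue as at $a=1$: the index counts eigenvalues \emph{strictly} below $2$, and since the eigenvalue exactly equal to $2$ is not counted, $\mathrm{Ind}(t_i(a)w)$ equals the index on the \emph{smaller} side, namely $5$. (This is consistent with Theorem \ref{N-result}, which assigns $5$ at $t=t_1(1)$ and $t=t_2(1)$.) Combining, $\mathrm{Ind}(tw)=5$ for $0<t\leqslant t_1(a)$ and for $t\geqslant t_2(a)$, and $\mathrm{Ind}(tw)=6$ for $t_1(a)<t<t_2(a)$, for every $a$ in the range.

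The main obstacle is the justification that $\mathrm{Nul}(tw)>3$ holds at \emph{every} parameter where the index changes — i.e.\ that the index cannot jump silently at a regular value. This rests on the fact (implicit in Nayatani's framework) that if an eigenvalue of $-\Delta_{tw}$ crosses the value $2$ at $(a_*,t_*)$, then $\mathrm{Nul}(t_*w)>3$ there; more delicate is ensuring there is no subtlety at the boundary point $a=a_0$ or with eigenvalue branches that touch $2$ tangentially without crossing (which would still be detected by the nullity but would not force an index jump — harmless here). A secondary point requiring care is the real-analyticity/continuity of $t_1(a),t_2(a)$ and the verification, justified numerically by Figures 1 and 2, that the two graphs remain disjoint and ordered on the whole interval $[1,a_0]$ so that the three regions are genuinely connected; this is exactly the source of the restriction $a\leqslant a_0$.
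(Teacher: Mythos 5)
Your argument is correct and follows essentially the same route as the paper: identify the locus $\{\mathrm{Nul}(tw)>3\}$ via Lemma \ref{dimH}, use continuity of the eigenvalues of $-\Delta_{tw}$ in $(a,t)$ so that $\mathrm{Ind}(tw)$ is constant wherever the nullity does not change, and import the values $5,6,5$ from Nayatani's computation at $a=1$ (Theorem \ref{N-result}). The only (minor) deviation is on the curves $t=t_i(a)$ themselves, where the paper propagates the value $5$ along each curve from $a=1$ using the constancy of the nullity ($=4$) there, whereas you pin it down locally by the semicontinuity/crossing argument at the threshold $2$; both yield $\mathrm{Ind}(t_i(a)w)=5$.
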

\noindent
\begin{proof}
Let $g=tw$. We consider at $t=t_1(a)$. What we already know is $\mathrm{Nul}(g)=4, \mathrm{Ind}(g)=5$ when $t=t_1(1)$. That is,  there are exactly $5$ eigenvalues smaller than $2$ of $-\Delta_g$. If $a$ moves in the range $1\leqslant a \leqslant a_0$, then $\mathrm{Nul}(g)=4$ for all $a$ in this range. We arrange the eigenvalues of $-\Delta_g$  from the smallest, and we write the $i$-th eigenvalue as $\lambda_i(a),\,\, i=1,2,\dotsb$. When $a$ moves from $1$ to $a_0$, $\lambda_i(a)$ changes continuously, so if $\mathrm{Ind}(g)$ changes, then $\mathrm{Nul}(g)$ also changes. Therefore $\mathrm{Ind}(g)=5$ does not change. When $t=t_2(a)$, $\mathrm{Ind}(g)$ can also be determined similarly.  

Now, take one $a$ in the range from $1$ to $a_0$, and we write it as $a=\widetilde{a}_0$. We move $t$. When $t=t_1(\widetilde{a}_0), t_2(\widetilde{a}_0)$, then $\mathrm{Nul}(g)=4, \mathrm{Ind}(g)=5$. We consider $t=\frac{1}{2}t_1(\widetilde{a}_0)$ in the range from $0$ to $t_1(\widetilde{a}_0)$. When $t=\frac{1}{2}t_1(1)$, then $\mathrm{Nul}(g)=3, \mathrm{Ind}(g)=5$. If $a$ moves from $1$ to $a_0$, $\mathrm{Ind}(g)=5$ also does not change since $\mathrm{Nul}(g)=3$ does not change in the range from $0$ to $t_1(a)$. Therefore, when $t=\frac{1}{2}t_1(\widetilde{a}_0)$, then $\mathrm{Nul}(g)=3, \mathrm{Ind}(g)=5$. $\mathrm{Ind}(g)=5$ also does not change since $\mathrm{Nul}(g)=3$ does not change in the range from $0$ to $t_1(\widetilde{a}_0)$. Therefore, when $0< t< t_1(\widetilde{a}_0)$, then $\mathrm{Ind}(g)=5$. 
Similarly, we can show that $\mathrm{Ind}(g)=6$ when $t_1(\widetilde{a}_0)< t< t_2(\widetilde{a}_0)$, and $\mathrm{Ind}(g)=5$ when $t < t_2(\widetilde{a}_0)$. 
\end{proof}

We close this section with concluding remarks.
\begin{Remark} (i)\,\, Our argument for the proof of Theorem \ref{theorem-d} is diffrent from Nayatani's one in \cite{Nayatani}. 
Nayatani first showed $\mathrm{Nul}(tw)=3$ and $\mathrm{Ind}(tw)=5$ when t is sufficiently small, showed  $\mathrm{Nul}(tw)=4$ when $t=t_1(1),t_2(1)$ and $\mathrm{Nul}(tw)=3$ when $t\neq t_1(1),t_2(1)$. It follows that $\mathrm{Ind}(tw)=5$ since $\mathrm{Nul}(tw)=3$ does not change when $0<t<t_1(1)$. Next, he showed that one eigenvalue lager than $2$ become smaller than $2$ when $t$ pass through $t_1(1)$. Therefore, he could show $\mathrm{Ind}(tw)=5$ when $t=t_1(1)$ and$\mathrm{Ind}(tw)=6$ when $t=t_1(1)<t<t_2(1)$. Similar argument determines $\mathrm{Ind}(tw)$ for $t\geq t_2(1)$. We use this result to prove Theorem \ref{theorem-d}. 

\noindent
(ii)\,\, 
Since $H(tw)\neq \{0\}$ for $t=t_1(a), t_2(a)$, there exists a (possibly branched) complete orientable minimal surface in $\R^3$ whose extended Gauss map is $tw$ and all of whose ends are planer for each of $t=t_1(a), t_2(a)$. In particular, the Morse indices of these minimal surfaces are both $5$. 
If $t\neq t_1(a), t_2(a)$, $tw$ is still the extended minimal surface of {\em some} complete orientable minimal surfaces in $\R^3$, and Theorem \ref{theorem-d} computes the Morse indices of these minimal surfaces.

\noindent
(iii)\,\, 
In the case that the Riemann surface has genus zero, it is a remarkable result of Ejiri-Kotani \cite{Ejiri-Kotani} and Montiel-Ros \cite{Montiel-Ros} that the index of a generic meromorphic function of degree $d$ has index $2d-1$. On the other hand, in the higher-genus case, there are not so many complete orientable minimal surfaces nor meromorphic functions whose indices are computed. 
Theorem \ref{theorem-d} should be of some interest as it provides new examples of meromorphic functions on compact Riemann surfaces of genus $1$ whose indices are computable. 
\end{Remark}

\noindent
\section{Appendix}
We record computations omitted in Section $4$ here. 
We compute the integrals of $\eta_i, w\eta_i, w^2\eta_i,\,\,i=1,2,3, $ on $\alpha_1,\alpha_2$. 
As before,  
we compute along the closed intervals $[0,a],[-\frac{1}{a},0]$ on the real axis, respectively.  
Also, we subtract from $\eta_i$ differentials of  meromorphic functions $f_i$ 
on the Riemann surface $M_a$ so that the integrals of the meromorphic differentials 
$\eta_i-df_i$ converge.

\smallskip\noindent
(i)\,\, The integrals of $w\eta_1$ on $\alpha_1$.

 First we compute
\begin{flalign*}
\hspace{10mm}&w\eta_1+\frac{1}{2}d\frac{1}{(z-A_1)(z-A_2)}&&\\
=&\frac{z-\frac{1}{2}(A_1+A_2)}{(z-A_1)^2(z-A_2)^2}dz-\frac{z-\frac{1}{2}(A_1+A_2)}{(z-A_1)^2(z-A_2)^2}dz&&\\
=&0.  
\end{flalign*}
Therefore, by $\int_{\alpha_1}\frac{1}{2}d\frac{1}{(z-A_1)(z-A_2)}=0$,  
\begin{flalign*}
\hspace{10mm}\int_{\alpha_1}w\eta_1=0.  &&
\end{flalign*}

\smallskip\noindent
(ii)\,\, The integrals of $w^2\eta_1$ on $\alpha_1$. 

First we compute
\begin{flalign*}
\hspace{10mm}&w^2\eta_1+\frac{1}{2}d\frac{w}{(z-A_1)(z-A_2)}&&\\
=&\frac{(z-\frac{1}{2}(A_1+A_2))w}{(z-A_1)^2(z-A_2)^2}dz+\frac{dw}{2(z-A_1)(z-A_2)}-\frac{(z-A_1)w+(z-A_2)w}{2(z-A_1)^2(z-A_2)^2}dz&&\\
=&\frac{3}{4}\frac{dz}{w}.  
\end{flalign*}
Therefore, by $\int_{\alpha_1}d\frac{w}{(z-A_1)(z-A_2)}=0$,  
\begin{flalign*}
\hspace{10mm}\int_{\alpha_1}w^2\eta_1=\frac{3}{4}\int_{\alpha_1}\frac{dz}{w}=3\sqrt{a}iK(ia). &&
\end{flalign*}

\smallskip\noindent
(iii)\,\, The integrals of $\eta_2$ on $\alpha_1$. 

First we compute
\begin{flalign*}
\hspace{10mm}&\eta_2-\frac{A_1}{B_1^2}d\frac{w}{(z-A_1)(z-A_2)}&&\\
=&\frac{z^2-A_1A_2}{(z-A_1)^2(z-A_2)^2w}dz-\frac{A_1}{B_1^2}d\frac{w}{(z-A_1)(z-A_2)}&&\\
=&\frac{z^2-A_1A_2}{(z-A_1)^2(z-A_2)^2w}dz-\frac{A_1}{B_1^2}(\frac{dw}{(z-A_1)(z-A_2)}-\frac{(z-A_1)w+(z-A_2)w}{(z-A_1)^2(z-A_2)^2}dz)&&\\
=&-\frac{3A_1}{2B_1^2}\frac{dz}{w}+\frac{B_1^2(z^2-A_1A_2)+A_1(2z-(A_1+A_2))w^2}{B_1^2(z-A_1)^2(z-A_2)^2w}dz&&\\
=&-\frac{3A_1}{2B_1^2}\frac{dz}{w}+\frac{B_1^2(z^2-A_1A_2)+(z^2-A_1A_2)w^2-(z^2-2zA_1+A_1^2)w^2}{B_1^2(z-A_1)^2(z-A_2)^2w}dz&&\\
=&-\frac{3A_1}{2B_1^2}\frac{dz}{w}+\frac{(z^2-A_1A_2)(w^2+B_1^2)}{B_1^2(z-A_1)^2(z-A_2)^2w}dz-\frac{w}{B_1^2(z-A_2)^2}dz&&\\
=&-\frac{3A_1}{2B_1^2}\frac{dz}{w}+\frac{(z^2-A_1A_2)(z+2A_1-(a-\frac{1}{a}))}{B_1^2(z-A_2)^2w}dz-\frac{w}{B_1^2(z-A_2)^2}dz&&\\
=&-\frac{3A_1}{2B_1^2}\frac{dz}{w}+\frac{2A_1z^2+\frac{2}{3}(2z+A_1)-\frac{1}{3}(a-\frac{1}{a})}{B_1^2(z-A_2)^2w}dz&&\\
=&\frac{A_1}{2B_1^2}\frac{dz}{w}+\frac{A_1+A_2}{6B_1^2}\frac{dz}{(z-A_2)^2w}.  
\end{flalign*}
Therefore, by $\int_{\alpha_1}d\frac{w}{(z-A_1)(z-A_2)}=0$,  
\begin{flalign*}
\hspace{10mm}\int_{\alpha_1}\eta_2=&\frac{A_1}{2B_1^2}\int_{\alpha_1}\frac{dz}{w}+\frac{A_1+A_2}{6B_1^2}\int_{\alpha_1}\frac{dz}{(z-A_2)^2w}\\=&\frac{2A_1\sqrt{a}}{B_1^2}iK(ia)+\frac{A_1+A_2}{3B_1^2}iI_2(a). &&
\end{flalign*}

\smallskip\noindent
(iv)\,\, The integrals of $w\eta_2$ on $\alpha_1$. 

First we compute
\begin{flalign*}
\hspace{10mm}&w\eta_2+d\frac{z}{(z-A_1)(z-A_2)}&&\\
=&\frac{z^2-A_1A_2}{(z-A_1)^2(z-A_2)^2}dz+d\frac{z}{(z-A_1)(z-A_2)}&&\\
=&\frac{z^2-A_1A_2}{(z-A_1)^2(z-A_2)^2}dz+\frac{1}{(z-A_1)(z-A_2)}dz-\frac{z(z-A_1)+z(z-A_2)}{(z-A_1)^2(z-A_2)^2}dz&&\\
=&0. 
\end{flalign*}
Therefore, by $\int_{\alpha_1}d\frac{z}{(z-A_1)(z-A_2)}=0$, 
\begin{flalign*}
\hspace{10mm}\int_{\alpha_1}w\eta_2=0. &&
\end{flalign*}

\smallskip\noindent
(v)\,\, The integrals of $w^2\eta_2$ on $\alpha_1$. 

First we compute
\begin{flalign*}
\hspace{10mm}&w^2\eta_2+d\frac{zw}{(z-A_1)(z-A_2)}&&\\
=&\frac{w(z^2-A_1A_2)}{(z-A_1)^2(z-A_2)^2}dz+d\frac{zw}{(z-A_1)(z-A_2)}&&\\
=&\frac{w(z^2-A_1A_2)}{(z-A_1)^2(z-A_2)^2}dz+\frac{w}{(z-A_1)(z-A_2)}dz\\
&+\frac{zdw}{(z-A_1)(z-A_2)}-\frac{(z-A_1)w+(z-A_2)w}{(z-A_1)^2(z-A_2)^2}dz&&\\
=&\frac{w(z^2-A_1A_2)}{(z-A_1)^2(z-A_2)^2}dz+\frac{3z}{2w}dz\\
&+\frac{w(z^2-(A_1+A_2)z+A_1A_2-2z^2+(A_1+A_2)z)}{(z-A_1)^2(z-A_2)^2}dz&&\\
=&\frac{3z}{2w}dz. 
\end{flalign*}
Therefore, by $\int_{\alpha_1}d\frac{zw}{(z-A_1)(z-A_2)}=0$, 
\begin{flalign*}
\hspace{10mm}\int_{\alpha_1}w^2\eta_2=\int_{\alpha_1}\frac{3z}{2w}dz=\frac{6}{\sqrt{a}}iI_1(a). &&
\end{flalign*}

\smallskip\noindent
(vi)\,\, The integrals of $\eta_3$ on $\alpha_1$. 

First we compute
\begin{flalign*}
\hspace{10mm}&\eta_3+\frac{1}{2}d\frac{1}{(z-A_1)(z-A_2)}&&\\
=&\frac{z-\frac{1}{2}(A_1+A_2)}{(z-A_1)^2(z-A_2)^2}dz-\frac{z-\frac{1}{2}(A_1+A_2)}{(z-A_1)^2(z-A_2)^2}dz&&\\
=&0.  
\end{flalign*}
Therefore, by $\int_{\alpha_1}d\frac{1}{(z-A_1)(z-A_2)}=0$, 
\begin{flalign*}
\hspace{10mm}\int_{\alpha_1}\eta_3=0. &&
\end{flalign*}

\smallskip\noindent
(vii)\,\, The integrals of $w\eta_3$ on $\alpha_1$. 

First we compute 
\begin{flalign*}
\hspace{10mm}&w\eta_3+\frac{1}{2}d\frac{w}{(z-A_1)(z-A_2)}&&\\
=&\frac{(z-\frac{1}{2}(A_1+A_2))w}{(z-A_1)^2(z-A_2)^2}dz+\frac{dw}{2(z-A_1)(z-A_2)}-\frac{(z-A_1)w+(z-A_2)w}{2(z-A_1)^2(z-A_2)^2}dz&&\\
=&\frac{3}{4}\frac{dz}{w}.  
\end{flalign*}
Therefore, by $\int_{\alpha_1}d\frac{w}{(z-A_1)(z-A_2)}=0$, 
\begin{flalign*}
\hspace{10mm}\int_{\alpha_1}w\eta_3=\frac{3}{4}\int_{\alpha_1}\frac{dz}{w}=\frac{6\sqrt{a}}{2}iK(ia). &&
\end{flalign*}

\smallskip\noindent
(viii)\,\, The integrals of $w^2\eta_3$ on $\alpha_1$. 

First we compute 
\begin{flalign*}
\hspace{10mm}&w^2\eta_3+\frac{B_1^2}{2}d\frac{1}{(z-A_1)(z-A_2)}&&\\
=&\frac{(z-\frac{1}{2}(A_1+A_2))(w^2-B_1^2)}{(z-A_1)^2(z-A_2)^2}dz&&\\
=&\frac{(z-\frac{1}{2}(A_1+A_2))(z+2A_1-(a-\frac{1}{a}))}{(z-A_1)^2(z-A_2)^2}dz\\
=&dz-\frac{(A_1-A_2)^2}{4}\frac{dz}{(z-A_2)^2}.  
\end{flalign*}
Therefore, by $\int_{\alpha_1}d\frac{1}{(z-A_1)(z-A_2)}=0$, 
\begin{flalign*}
\hspace{10mm}\int_{\alpha_1}w^2\eta_3=\int_{\alpha_1}dz-\frac{(A_1-A_2)^2}{4}\int_{\alpha_1}\frac{dz}{(z-A_2)^2}=0. &&
\end{flalign*}

\section*{}

\end{document}